\documentclass[11pt,reqno]{amsart} 

\usepackage{amsthm,amsfonts,amscd,amsmath}
\usepackage[hidelinks]{hyperref} 
\usepackage{url}
\usepackage{xcolor}
\hypersetup{
    colorlinks,
    linkcolor={red!50!black},
    citecolor={blue!50!black},
    urlcolor={blue!80!black}
}

\usepackage[normalem]{ulem}
\usepackage{graphicx} 
\usepackage{datetime} 
\usepackage{cancel}
\usepackage{caption,subcaption}
\captionsetup{format=hang,labelfont={bf},textfont={small,it}} 
\numberwithin{figure}{section}
\numberwithin{table}{section}

\usepackage{stmaryrd} 
\usepackage{framed} 

\let\citep\cite

\newcommand{\undersetbrace}[2]{\underset{\displaystyle{#1}}{\underbrace{#2}}}

\newcommand{\cf}{\textit{cf.\ }} 
\newcommand{\Iverson}[1]{\ensuremath{\left[#1\right]_{\delta}}} 

\newcommand{\floor}[1]{\left\lfloor #1 \right\rfloor} 
\newcommand{\ceiling}[1]{\left\lceil #1 \right\rceil} 
\newcommand{\e}[1]{e\left(#1\right)} 

\usepackage{upgreek}

\renewcommand{\chi}{\upchi}

\DeclareMathOperator{\ds}{ds} 
\DeclareMathOperator{\Id}{Id}

\title[Factorization Theorems for Relatively Prime Divisor Sums]{
       Factorization Theorems for 
       Relatively Prime Divisor Sums, GCD Sums and 
       Generalized Ramanujan Sums 
} 
\author[H. Mousavi and M. D. Schmidt]{
         Hamed Mousavi \\ 
         Maxie D. Schmidt 
} 

\email{\href{mailto:hmousavi6@gatech.edu}{hmousavi6@gatech.edu} \\ 
     \href{mailto:maxieds@gmail.com}{maxieds@gmail.com} \\ 
       \href{mailto:mschmidt34@gatech.edu}{mschmidt34@gatech.edu}
}
\address{Georgia Institute of Technology \\ 
         School of Mathematics \\ 
         117 Skiles Building \\ 
         686 Cherry Street NW \\ 
         Atlanta, GA 30332 \\ 
         USA
} 

\date{\today} 

\keywords{divisor sum; totient function; matrix factorization; M\"obius inversion; partition function. }
\subjclass[2010]{11N64; 11A25; 05A17. }

\allowdisplaybreaks 

\theoremstyle{plain} 
\newtheorem{theorem}{Theorem}

\newtheorem{prop}[theorem]{Proposition}
\newtheorem{lemma}[theorem]{Lemma}
\newtheorem{cor}[theorem]{Corollary}
\numberwithin{theorem}{section}

\theoremstyle{definition} 
\newtheorem{example}[theorem]{Example}
\newtheorem{remark}[theorem]{Remark}
\newtheorem{definition}[theorem]{Definition}
\newtheorem{notation}[theorem]{Notation}

\setlength{\textwidth}{7in} 
\setlength{\evensidemargin}{-0.25in} 
\setlength{\oddsidemargin}{-0.25in} 

\usepackage{longtable}
\usepackage{arydshln} 
\usepackage[symbols,nogroupskip,shortcuts,nomain,automake,ucmark]{glossaries-extra}
\usepackage{glossary-mcols}

\setlength{\glsdescwidth}{0.8\textwidth}
\newglossarystyle{glossstyle}{%
 {\begin{longtable}{lp{\glsdescwidth}}}%
 {\end{longtable}}%
 \setlength{\parskip}{3.5pt}

 \renewcommand*{\glsgroupheading}[1]{}%

  \renewcommand{\glossarymark}[1]{}
}
\setglossarystyle{glossstyle}
\glsaddall[types={symbols}]
\makeglossaries

\newglossaryentry{fCvlg}{
    symbol={\ensuremath{\ast; f \ast g}},
    sort={fg},
    description={The Dirichlet convolution of $f$ and $g$, $f \ast g(n) := \sum\limits_{d|n} f(d) g(n/d)$, for $n \geq 1$. This symbol for the discrete convolution of two arithmetic functions is the only notion of convolution of functions we employ within the article.},
    type={symbols},
    name={Dirichlet convolution}
    }
\newglossaryentry{coeffExtraction}{
    symbol={\ensuremath{[q^n] F(q)}},
    sort={coeffExtraction},
    description={The coefficient of $q^n$ in the power series expansion of $F(q)$ about zero.},
    type={symbols},
    name={Series coefficient extraction}
    }
\newglossaryentry{MoebiusMuFunc}{
    symbol={\ensuremath{\mu(n)}},
    sort={MoebiusMuFunc},
    description={The M\"obius function.},
    type={symbols},
    name={M\"obius function}
    }
\newglossaryentry{mGCDn}{
    symbol={\ensuremath{\operatorname{gcd}(m, n); (m,n)}},
    sort={mGCDn},
    description={The greatest common divisor of $m$ and $n$. Both notations for the GCD are used 
    interchangably within the article. },
    type={symbols},
    name={Greatest common divisor}
    }
\newglossaryentry{IversonI}{
    symbol={\ensuremath{\Iverson{n=k}}},
    sort={IversonI},
    description={Synonym for $\delta_{n,k}$ which is one if and only if $n = k$, and zero otherwise.},
    type={symbols},
    name={Iverson's convention}
    }
\newglossaryentry{IversonII}{
    symbol={\ensuremath{\Iverson{\mathtt{cond}}}},
    sort={IversonII},
    description={For a boolean-valued \texttt{cond}, $\Iverson{\mathtt{cond}}$ evaluates to one precisely when \texttt{cond} is true, and zero otherwise.},
    type={symbols},
    name={Iverson's convention}
    }
\newglossaryentry{yfn}{
    symbol={\ensuremath{y_f(n)}},
    sort={yfn},
    description={The function $y_f(n)$ denotes the Dirichlet inverse of the function 
    $h(n) := f(n) \phi(n) / n^2$ where \glssymbol{phin} is Euler's totient function and 
    $f$ is any invertible arithmetic function such that $f(1) \neq 0$. This function is used to 
    express the result in Corollary \ref{cor_exact_formula_forn_gn} on page 
    \pageref{cor_exact_formula_forn_gn}. 
    A special case, denoted by $y(n)$, corresponding to the case where 
    $f(n) \equiv n$ is employed in stating Corollary \ref{cor_Mertens_function_v2} in Section 
    \ref{subSection_DFT_exps}.},
    type={symbols},
    name={A special case Dirichlet inverse function}
    }
\newglossaryentry{tnk}{
    symbol={\ensuremath{t_{n,k}}},
    sort={tnk},
    description={Matrix sequence involved in the generating function expansions of the type I sums defined on page \pageref{eqn_intro_two_sum_fact_types_def_v1} as 
    \[
    T_{f}(x) = [q^x]\left(\frac{1}{(q; q)_{\infty}} \sum_{n \geq 2} \sum_{k=1}^n t_{n,k} f(k) \cdot q^n + 
     f(1) \cdot q\right)
    \]},
    type={symbols},
    name={Matrix coefficients}
    }
\newglossaryentry{tnkInv}{
    symbol={\ensuremath{t_{n,k}^{(-1)}}},
    sort={tnkInv},
    description={Inverse matrix of the sequence \glssymbol{tnk} that is defined on page \pageref{theorem_snk_snkinv_seq_formulas_v1}.},
    type={symbols},
    name={Inverse matrix coefficients}
    }
\newglossaryentry{chikn}{
    symbol={\ensuremath{\chi_{1,k}(n)}},
    sort={chikn},
    description={The principal Dirichlet character modulo $k$, i.e., the indicator function of the natural numbers which are relatively prime for $n,k \geq 1$, $\chi_{1,k}(n) = \Iverson{(n,k)=1}$.},
    type={symbols},
    name={Principal Dirichlet character modulo $k$}
    }
\newglossaryentry{dn}{
    symbol={\ensuremath{d(n)}},
    sort={dn},
    description={The ordinary divisor function, $d(n) := \sum_{d|n} 1$.},
    type={symbols},
    name={Divisor function}
    }
\newglossaryentry{epsilonN}{
    symbol={\ensuremath{\varepsilon(n)}},
    sort={epsilonN},
    description={The multiplicative identity with respect to Dirichlet convolution, $\varepsilon(n) = \delta_{n,1}$.},
    type={symbols},
    name={Dirichlet multiplicative identity}
    }
\newglossaryentry{munk}{
    symbol={\ensuremath{\mu_{n,k}}},
    sort={munk},
    description={Matrix sequence defined on page \pageref{prof_inversion_formula}. The invertible sequence is an analog to the role of the M\"obius function in M\"obius inversion. In this case these inversion coefficients are defined such that 
    \[
    g(n) = \sum_{\substack{d=1 \\ (d, n)=1}}^n 
     f(d) \quad\iff\quad f(n) = \sum_{d=1}^n g(d+1) \mu_{n,d}.
    \]
    See Proposition \ref{prof_inversion_formula} and Section 
    \ref{Section_Sum_TypeI} for the relation of this sequence (and its inverse) to the 
    factorizations of type I sums.
    },
    type={symbols},
    name={Matrix coefficients}
    }
\newglossaryentry{munkInv}{
    symbol={\ensuremath{\mu_{n,k}^{(-1)}}},
    sort={munkInv},
    description={Inverse matrix sequence of \glssymbol{munk} defined on page \pageref{prof_inversion_formula}.},
    type={symbols},
    name={Inverse matrix coefficients}
    }
\newglossaryentry{unk}{
    symbol={\ensuremath{u_{n,k}(f, w)}},
    sort={unk},
    description={Matrix sequence defined on page \pageref{eqn_intro_two_sum_fact_types_def_v1} in the expansion of the generating functions for the type II sums as 
    \[
    g(x) = [q^x]\left(\frac{1}{(q; q)_{\infty}} \sum_{n \geq 2} \sum_{k=1}^n u_{n,k}(f, w) \left[ 
     \sum_{m=1}^k L_{f,g,m}(k) w^m\right] \cdot q^n\right),\ w \in \mathbb{C}. 
    \]},
    type={symbols},
    name={Matrix coefficients}
    }
\newglossaryentry{unkInv}{
    symbol={\ensuremath{u_{n,k}^{(-1)}(f, w)}},
    sort={unkInv},description={Inverse matrix terms of the sequence \glssymbol{unk} defined on page \pageref{eqn_intro_two_sum_fact_types_def_v1}.},
    type={symbols},
    name={Inverse matrix coefficients}
    }
\newglossaryentry{DTFT}{
    symbol={\ensuremath{\operatorname{DTFT}[f](k)}},
    sort={DTFT},
    description={The discrete time Fourier transform (DTFT) of $f$ at $k$, also denoted by $F[k]$.},
    type={symbols},
    name={DTFT}
    }
\newglossaryentry{DFT}{
    symbol={\ensuremath{\operatorname{DFT}[f](k)}},
    sort={DTFT},
    description={The discrete Fourier transform (DFT) of $f$ at $k$. We use this transformation in Section \ref{subSection_DFT_exps} of the article.},
    type={symbols},
    name={DTFT}
    }
\newglossaryentry{phin}{
    symbol={\ensuremath{\phi(n)}},
    sort={phin},
    description={Euler's classical totient function, $\phi(n) := \sum\limits_{\substack{1 \leq d \leq n \\ (d,n) = 1}} 1$.},
    type={symbols},
    name={Euler totient function}
    }
\newglossaryentry{phikn}{
    symbol={\ensuremath{\phi_k(n)}},
    sort={phikn},
    description={Generalized totient function, $\phi_k(n) := \sum\limits_{\substack{1 \leq d \leq n \\ (d,n) = 1}} d^k$.},
    type={symbols},
    name={Generalized totient function}
    }
\newglossaryentry{Tfx}{
    symbol={\ensuremath{T_f(x)}},
    sort={Tfx},
    description={The type I sum over an arithmetic function $f$, $T_f(n) := \sum\limits_{\substack{d \leq x \\ (d,x)=1}} f(d)$.},
    type={symbols},
    name={Type I sum}
    }
\newglossaryentry{Lfgkx}{
    symbol={\ensuremath{L_{f,g,k}(x)}},
    sort={Lfgkx},
    description={The type II Anderson-Apostol sum over the arithmetic functions $f,g$, $L_{f,g,k}(x) := \sum\limits_{d|(k,x)} f(d) g(x/d)$.},
    type={symbols},
    name={Type II sum}
    }
\newglossaryentry{pn}{
    symbol={\ensuremath{p(n)}},
    sort={pn},
    description={The partition function generated by $p(n) = [q^n] \prod\limits_{n \geq 1} (1-q^n)^{-1}$.},
    type={symbols},
    name={Partition function $p$}
    }
\newglossaryentry{Mx}{
    symbol={\ensuremath{M(x)}},
    sort={Mx},
    description={The Mertens function which is the summatory function over $\mu(n)$, $M(x) := \sum\limits_{n \leq x} \mu(n)$.},
    type={symbols},
    name={Mertens function}
    }
\newglossaryentry{Phinz}{
    symbol={\ensuremath{\Phi_n(z)}},
    sort={Phinz},
    description={The $n^{th}$ cyclotomic polynomial in $z$ defined by $\Phi_n(z) := \prod\limits_{\substack{1 \leq k \leq n \\ (k,n)=1}} (z-e^{2\pi\imath k/n})$.},
    type={symbols},
    name={Cyclotomic polynomial}
    }
\newglossaryentry{cqn}{
    symbol={\ensuremath{c_q(n)}},
    sort={cqn},
    description={Ramanujan's sum, $c_q(n) := \sum\limits_{d|(q,n)} d \mu(q/d)$.},
    type={symbols},
    name={Ramanujan's sum}
    }
\newglossaryentry{SigmaSOD}{
    symbol={\ensuremath{\sigma_{\alpha}(n)}},
    sort={SigmaSOD},
    description={The generalized sum-of-divisors function, $\sigma_{\alpha}(n) := \sum\limits_{d|n} d^{\alpha}$, for any $n \geq 1$ and $\alpha \in \mathbb{C}$.},
    type={symbols},
    name={Generalized sum-of-divisors function}
    }
\newglossaryentry{Zetas}{
    symbol={\ensuremath{\zeta(s)}},
    sort={Zetas},
    description={The Riemann zeta function, defined by $\zeta(s) := \sum\limits_{n \geq 1} n^{-s}$ when $\Re(s) > 1$, and by analytic continuation to the entire complex plane with the exception of a simple pole at $s = 1$.},
    type={symbols},
    name={Riemann zeta function}
    }
\newglossaryentry{Expex}{
    symbol={\ensuremath{e(x)}},
    sort={Expex},
    description={The complex exponential function, $e(x) := \exp(2\pi\imath \cdot x)$.},
    type={symbols},
    name={Complex exponential function shorthand}
    }
\newglossaryentry{snk}{
    symbol={\ensuremath{s_{n,k}}},
    sort={snk},
    description={Matrix coefficients in Lambert series type factorizations defined on page \pageref{page_snk_LSeriesFactMatrixCoeffs_ref}. These coefficients are defined precisely as the coefficients of the generating function $[q^n] (q; q)_{\infty} q^k / (1-q^k)$ for $k \geq 1$ where 
    \glssymbol{qPochInfty} is the infinite $q$-Pochhammer symbol.},
    type={symbols},
    name={Lambert series factorization matrix coefficients}
    }
\newglossaryentry{fhatn}{
    symbol={\ensuremath{\hat{f}(n)}},
    sort={fhatn},
    description={A shorthand notation for scaled arithmetic function terms $\hat{f}(n) := w^n / (w^n-1) f(n)$ for some indeterminate $w$. The notation is defined on page \pageref{prop_Pjwt_poly_matrix_exp}.},
    type={symbols},
    name={Auxiliary scaled functions}
    }
\newglossaryentry{fpmn}{
    symbol={\ensuremath{f_{\pm}(n)}},
    sort={fpmn},
    description={For any arithmetic function $f$, we define 
    $f_{\pm}(n) = f(n) \Iverson{n > 1} - f(1) \Iverson{n = 1}$, i.e., 
    the function that has identical values as $f$ for all $n \geq 2$, and 
    whose initial value is $f_{\pm}(1) := -f(1)$ when $n = 1$.},
    type={symbols},
    name={Case-wise modified functions}
    }
\newglossaryentry{uhatnkfw}{
    symbol={\ensuremath{\hat{u}_{n,k}(f, w)}},
    sort={uhatnkfw},
    description={Matrix coefficients defined in terms of an indeterminate parameter $w$ as $\hat{u}_{n,k}(f, w) := (w^k-1) \cdot u_{n,k}(f, w)$.},
    type={symbols},
    name={Matrix coefficients}
    }
\newglossaryentry{dsjn}{
    symbol={\ensuremath{\operatorname{ds}_j(f; n)}},
    sort={dsjn},
    description={Summands in the formula for the Dirichlet inverse of an arithmetic function defined on page \pageref{def_djn_Dn_func_defs}. The precise definition of this function is given by 
    \[
    \ds_j(f; n) = \begin{cases} 
     (-1)^{\delta_{n,1}} \widehat{f}(n), & \text{ if $j = 1$; } \\ 
     \sum\limits_{\substack{d|n \\ d>1}} \widehat{f}(d) \ds_{j-1}\left(f; \frac{n}{d}\right), & 
     \text{ if $j \geq 2$, } 
     \end{cases} 
    \]
    where the fixed function $\hat{f}$ is defined by glossary symbol 
    \glssymbol{fhatn}.},
    type={symbols},
    name={Inverse function component terms}
    }
\newglossaryentry{Dfn}{
    symbol={\ensuremath{D_f(n)}},
    sort={Dn},
    description={Function related to the Dirichlet inverse of a function $f$ defined on page \pageref{def_djn_Dn_func_defs}. 
    More precisely, this function is defined by the sum 
    $D_f(n) := \sum\limits_{j=1}^n \frac{\operatorname{ds}_{2j}(f; n)}{\hat{f}(1)^{2j+1}}$, where this definition involves the glossary symbols 
    \glssymbol{dsjn} and \glssymbol{fhatn}. 
    Lemma \ref{lemma_Dastfhat} 
    relates this function to the Dirichlet inverse of the function $\hat{f}(n)$.},
    type={symbols},
    name={Inverse function sum function}
    }
\newglossaryentry{fInvn}{
    symbol={\ensuremath{f^{-1}(n)}},
    sort={fInvn},
    description={The Dirichlet inverse of $f$ with respect to convolution defined recursively by $f^{-1}(n) = -\frac{1}{f(1)} \sum\limits_{\substack{d|n \\ d>1}} f(d) f^{-1}(n/d)$ provided that $f(1) \neq 0$.},
    type={symbols},
    name={Dirichlet inverse of $f$}
    }
\newglossaryentry{qPochInfty}{
    symbol={\ensuremath{(q; q)_{\infty}}},
    sort={qPochInfty},
    description={The infinite $q$-Pochhammer symbol defined as the product $(q; q)_{\infty} := \prod\limits_{n \geq 1} (1-q^n)$ for $|q| < 1$.},
    type={symbols},
    name={Infinite $q$-Pochhammer symbol}
    }
\newglossaryentry{Fk}{
    symbol={\ensuremath{F[k]}},
    sort={Fk},
    description={Discrete Fourier transform coefficients defined on page \pageref{page_DTFT_coefficient_defs}.},
    type={symbols},
    name={DFT coefficients}
    }
\newglossaryentry{skfgn}{
    symbol={\ensuremath{s_k(f, g; n)}},
    sort={skfgn},
    description={Shorthand for the periodic (modulo $k$) divisor sums defined on page \pageref{eqn_snk_akm_intro_Fourier_coeff_exps} and expanded by the functions listed in 
    \glssymbol{akfgn} of this glossary. The precise expansion and corresponding finite Fourier series expansion of this 
    function is given by $s_k(f, g; n) = \sum\limits_{d|(n, k)} f(d) g(k/d) = \sum\limits_{m=1}^{k} a_k(f, g; m) e^{2\pi\imath \cdot mn / k}$.},
    type={symbols},
    name={Shorthand for periodic divisor sums}
    }
\newglossaryentry{akfgn}{
    symbol={\ensuremath{a_k(f, g; n)}},
    sort={akfgn},
    description={Discrete Fourier coefficients of the periodic divisor sums $s_k(f, g; n)$ defined on page \pageref{eqn_snk_akm_intro_Fourier_coeff_exps} and as symbol \glssymbol{skfgn} in this glossary. The precise definition 
    of these sums is given by $a_k(f, g; n) = \sum\limits_{d|(k, n)} g(d) f(n/d) \frac{d}{k}$.},
    type={symbols},
    name={DFT coefficients}
    }
\newglossaryentry{fCvlDashVI}{
    symbol={\ensuremath{f \ast C_{-}(m)}},
    sort={fCvlDashVI},
    description={This notation indicates that the index over which we perform the Dirchlet convolution is given by the dash parameter, 
    $(f \ast C_{-}(m))(n) := \sum_{d|n} f(d) C_{n/d}(m)$.},
    type={symbols},
    name={Definition of convolution with a fixed parameter}
    }
\newglossaryentry{fCvlDashVII}{
    symbol={\ensuremath{f \ast C_k(-)}},
    sort={fCvlDashVII},
    description={This notation indicates that the index over which we perform the Dirchlet convolution is given by the dash parameter, 
    $(f \ast C_k(-))(n) := \sum_{d|n} f(d) C_k(n/d)$.},
    type={symbols},
    name={Definition of convolution with a fixed parameter}
    }
\newglossaryentry{akl}{
    symbol={\ensuremath{a_{k,\ell}}},
    sort={akl},
    description={Sequence of coefficients that are defined explicitly on page \pageref{def_NotationAndSpecialExpSums} in the discrete Fourier series expansion of the type II sums \glssymbol{Lfgkx}. These coefficients are implicitly defined by Definition \ref{def_NotationAndSpecialExpSums} by the sums 
    $L_{f,g,k}(n) = \sum\limits_{\ell=0}^{k-1} a_{k,\ell} \cdot e(\ell n/k)$, where \glssymbol{Expex} is the shorthand for the complex exponential terms in the exponential sums we define in the article.},
    type={symbols},
    name={Local coefficient definition}
    }
\newglossaryentry{Idkn}{
    symbol={\ensuremath{\operatorname{Id}_k(n)}},
    sort={Idkn},
    description={The power-scaled identity function, $\operatorname{Id}_k(n) := n^k$ for $n \geq 1$.},
    type={symbols},
    name={Dirchlet identity function for powers}
    }
\newglossaryentry{Floorx}{
    symbol={\ensuremath{\lfloor x \rfloor}},
    sort={Floorx},
    description={The floor function $\lfloor x \rfloor := x - \{x\}$ where $0 \leq \{x\} < 1$ denotes the fractional part of $x \in \mathbb{R}$.},
    type={symbols},
    name={Floor function}
    }
\newglossaryentry{Ceilingx}{
    symbol={\ensuremath{\lceil x \rceil}},
    sort={Ceilingx},
    description={The ceiling function $\lceil x \rceil := x + 1 - \{x\}$ where $0 \leq \{x\} < 1$ denotes the fractional part of $x \in \mathbb{R}$.},
    type={symbols},
    name={Ceiling function}
    }
\newglossaryentry{Pkwt}{
    symbol={\ensuremath{P_k(w, t)}},
    sort={Pkwt},
    description={Type of orthogonal polynomial function defined on page \pageref{subSection_orthog_conditions_integral_formulas}. It satisfies that $\sum\limits_{k=1}^n \hat{u}_{n,k}(f, w) P_k(w, t) = t^n$.},
    type={symbols},
    name={Local DTFT function}
    }
\newglossaryentry{omegat}{
    symbol={\ensuremath{\omega(t)}},
    sort={omegat},
    description={Orthogonal polynomial orthogonality weight function defined on page \pageref{eqn_gen_orthog_condition_stmt_v1}. This function is related to glossary symbol \glssymbol{Pkwt}.},
    type={symbols},
    name={Local DTFT transformation function}
    }
\newglossaryentry{Gj}{
    symbol={\ensuremath{G_j}},
    sort={Gj},
    description={Denotes the interleaved (or generalized) sequence of pentagonal numbers defined explictly by the formula $G_j := \frac{1}{2} \ceiling{\frac{j}{2}} \ceiling{\frac{3j+1}{2}}$. The sequence begins as 
    $\{G_j\}_{j \geq 0} = \{0, 1, 2, 5, 7, 12, 15, 22, 26, 35, 40, 51, \ldots\}$.},
    type={symbols},
    name={Interleaved pentagonal numbers}
    }
\newglossaryentry{Ckn}{
    symbol={\ensuremath{C_k(n)}},
    sort={Ckn},
    description={Sequence of nested $k$-convolutions of an arithmetic function $f$ with itself defined on page \pageref{page_eqn_Ckn_kCvls}. The precise definition of this sequence is given by 
    \[
     C_k(n) = \begin{cases} 
     \widehat{f}(n) - \widehat{f}(1)\varepsilon(n), & \text{ if $k = 1$; } \\ 
     \sum\limits_{d|n} \left(\widehat{f}(d) - \widehat{f}(1) \varepsilon(d)\right) 
     C_{k-1}(n/d), & \text{ if $k \geq 2$, } 
     \end{cases} 
     \] 
     where the symbol $\hat{f}(n)$ is defined in glossary entry \glssymbol{fhatn}.},
    type={symbols},
    name={Nested $k$-convolutions of $f$ with itself}
    }    
\glsaddall[types={symbols}]

\begin{document} 

\begin{abstract} 
We build on and generalize recent work on so-termed factorization theorems for Lambert series 
generating functions. These factorization theorems allow us to express formal generating 
functions for special sums as invertible matrix transformations involving partition functions.
In the Lambert series case, the generating functions at hand 
enumerate the divisor sum coefficients of $q^n$ as 
$\sum_{d|n} f(d)$ for some arithmetic function $f$. 
Our new factorization theorems provide analogs to these 
established expansions generating corresponding sums of the form 
$\sum_{d: (d,n)=1} f(d)$ (type I sums) and the Anderson-Apostol sums 
$\sum_{d|(m,n)} f(d) g(n/d)$ (type II sums) 
for any arithmetic functions $f$ and $g$. 
Our treatment of the type II sums includes a matrix-based factorization method 
relating the partition function $p(n)$ to arbitrary arithmetic functions $f$. 
Weconclude the last section of the article by directly expanding new 
formulas for an arithmetic function $g$ by the type II sums using discrete, and discrete 
time, Fourier transforms (DFT and DTFT) for functions over inputs of greatest common divisors. 
\end{abstract}

\maketitle

\section{Notation and conventions} 
\label{Section_Glossary_NotationConvs} 

To make a common source of references to definitions of key functions and sequences defined within the article, 
we provide a comprehensive list of commonly used notation and conventions. 
Where possible, we have included page references to local definitions of special 
sequences and functions where they are given in the text. We have organized the symbols list alphabetically by symbol. This listing of notation and conventions is a useful reference to accompany the article. 
It is provided in Appendix \ref{Appendix_Glossary_NotationConvs} starting on page 
\pageref{Appendix_Glossary_NotationConvs}. 

\section{Introduction} 
\label{Section_Intro}

\subsection{Motivation} 

The \emph{average order} of an arithmetic function $f$ is defined as the arithmetic mean of the summatory function, 
$F(x) := \sum_{n \leq x} f(n)$, as $F(x) / x$. We typically represent the average order of 
a function in the form of an asymptotic formula in the cases where the formula, $F(x) / x$, for the average order 
diverges as $x \rightarrow \infty$. For example, it is well known that the average order of $\Omega(n)$, which counts the 
number of prime factors of $n$ (counting multiplicity), is $\log\log n$, and that the average order of Euler's totient 
function, $\phi(n)$, is given by $\frac{6n}{\pi^2}$ \cite{HARDYWRIGHT}. 
We are motivated by considering the breakdown of the partial sums of an 
arithmetic function $f(d)$ whose average order we would like to estimate into sums over the 
pairwise disjoint sets of component indices $d \leq x$: 
\begin{align} 
\label{eqn_avg_order_sum_decomp_intro_v1} 
\sum_{d \leq x} f(d) & = \sum_{\substack{1 \leq d \leq x \\ (d,x)=1}} f(d) + \sum_{\substack{d|x \\ d>1}} f(d) + 
     \sum_{\substack{1 < d \leq x \\ 1 < (d,x) < x}} f(d). 
\end{align} 
In particular, in evaluating the partial sums of an arithmetic function $f(d)$ over all $d \leq x$, we wish 
to break the terms in these partial sums into three sets: those $d$ relatively prime to $x$, the 
$d$ dividing $x$ (for $d > 1$), and the 
somewhat less ``round'' set of indices $d$ which are neither relatively prime to $x$ nor proper divisors of $x$. 
Now if we let $f$ denote any arithmetic function, we define the remainder terms in our average order 
expansions from \eqref{eqn_avg_order_sum_decomp_intro_v1} as follows: 
\begin{equation} 
\label{eqn_Tfx_remainder_sum_terms_def} 
\widetilde{S}_{f}(x) = \sum_{d \leq x} f(d) - 
     \sum_{\substack{1 \leq d \leq x \\ (d,x)=1}} f(d) - \sum_{\substack{d|x \\ d > 1}} f(d). 
\end{equation} 
For instance, when $x = 24$ we have that 
\[
\widetilde{S}_f(24) = f(9) + f(10) + f(14) + f(16) + f(18) + f(20) + f(21) + f(22). 
\] 
We observe that the last divisor sum terms in \eqref{eqn_Tfx_remainder_sum_terms_def} correspond to the 
coefficients of powers of $q$ in the Lambert series generating function over $f$ in 
the following form considered in the next subsection: 
\[
\sum_{d|x} f(d) = [q^x] \left(\sum_{n \geq 1} \frac{f(n) q^n}{1-q^n}\right),\ |q| < 1. 
\] 
We can see that the average order sums on the left-hand-side of 
\eqref{eqn_avg_order_sum_decomp_intro_v1} correspond to the hybrid of 
divisor and relatively prime divisor sums of the form 
\[
\sum_{d \leq x} f(d) = \sum_{m|x} 
     \sum_{\substack{k=1 \\ \left(k, \frac{x}{m}\right) = 1}}^{\frac{x}{m}} f(km) = 
     \sum_{m|x} \sum_{\substack{k=1 \\ (k, m) = 1}}^{m} f\left(\frac{kx}{m}\right). 
\] 
In this article, we study and prove new results relating both variants of the sums expanding the 
right-hand-side of the previous equation to restricted partitions and special 
partition functions. Namely, a combination of the results we prove in 
Section \ref{Section_Sum_TypeI} and the Lambert series factorization theorem results 
summarized in the next subsection allow us to write 
\[
\sum_{n \leq x} f(d) = \sum_{d|x+1} s_{x,d} \left[ 
     \sum_{j=0}^d \sum_{k=1}^{j-1} \sum_{i=0}^j (-1)^{\ceiling{\frac{i}{2}}} p(d-j) 
     \chi_{1,k}(j-k-G_i) \Iverson{j-k-G_i \geq 1} \cdot 
     f\left(\frac{(x+1)k}{d}\right) 
     \right], 
\] 
where $\chi_{1,k}(n)$ is the principal Dirichlet character modulo $k$, 
$\Iverson{n = k} = \delta_{n,k}$ denotes Iverson's convention, 
$G_j := \frac{1}{2} \ceiling{j/2} \ceiling{(3j+1)/2}$ denotes the 
sequence of interleaved, or generalized 
pentagonal numbers, the triangular sequence 
\label{page_snk_LSeriesFactMatrixCoeffs_ref} 
$s_{n,k} := [q^n] (q; q)_{\infty} \frac{q^k}{1-q^k}$ corresponds to the 
difference of restricted partition functions discussed in the next subsection, 
and $p(n)$ is the classical partition function. 
The analysis of the asymptotic properties of these sums is a central topic in the 
study of the behavior of arithmetic functions, analytic number theory, and in 
applications such as algorithmic analysis. 
Our new results connect variants of such sums over multiplicative functions with the 
distinctly additive flavor of the theory of partitions. 

\subsection{Variations on recent work} 

There is a fairly complete and extensive set of expansions providing identities related to 
these Lambert series generating functions 
and their matrix factorizations in the form of so-termed ``Lambert series factorization theorems'' studied by 
Merca and Schmidt over 2017--2018 \citep{AA,MERCA-LSFACTTHM,MERCA-SCHMIDT-LSFACTTHM}. 
These results provide factorizations for a Lambert series generating function 
over the arbitrary arithmetic function $f$ expanded in the form of 
\begin{align} 
\label{eqn_LambertSeriesFactThm_v1} 
L_f(q) := 
\sum_{n \geq 1} \frac{f(n) q^n}{1-q^n} & = \frac{1}{(q; q)_{\infty}} \sum_{n \geq 1} \left( 
     \sum_{k=1}^n s_{n,k} f(k)\right) q^n, 
\end{align} 
where $s_{n,k} = s_o(n, k) - s_e(n, k)$ is independent of $f$ and is defined as the difference of the 
functions $s_{o/e}(n, k)$ which respectively denote the number of 
$k$'s in all partitions of $n$ 
into an odd (even) number of distinct parts. 
These so-termed factorization theorems, which effectively provide a matrix-based expansion of an 
ordinary generating function for the divisor sums of the type enumerated by Lambert series 
expansions, connect the additive theory of partitions to the more multiplicative constructions 
of power series generating functions found in other branches on number theory. 
In these cases, it appears that it is most natural, in some sense, to expand these sums via the 
factorizations defined in \eqref{eqn_LambertSeriesFactThm_v1} 
since the matrix entries (and their inverses) are also partition-related. 
It then leads us to the question of what other natural, or even canonical, analogous expansions 
can be formed for other more general variants of the above divisor sums. 

More generally, we can form analogous matrix-based factorizations of the 
generating functions of the sequences of special sums in 
\eqref{eqn_GenMatrixBasedSumsEnum_SetsAn} provided that these 
transformations are invertible. That is, we can express generating functions for the 
sums $s_n(f, \mathcal{A}) := \sum_{k \in \mathcal{A}_n} f(k)$ where we take 
$\mathcal{A}_n \subseteq [1,n] \cap \mathbb{Z}$ for all $n \geq 1$: 
\begin{equation} 
\label{eqn_GenMatrixBasedSumsEnum_SetsAn} 
\sum_{n \geq 1} \left(\sum_{\substack{k \in \mathcal{A}_n \\ 
     \mathcal{A}_n \subseteq [1,n]}} f(k)\right) q^n = 
     \frac{1}{(q; q)_{\infty}} 
     \sum_{n \geq 1} \left(\sum_{k=1}^n v_{n,k}(\mathcal{A}) f(k)\right) q^n, 
\end{equation} 
The sums in \eqref{eqn_sum_vars_TL_defs_intro} below are referred to as 
\emph{type I} and \emph{type II sums}, respectively, 
in the next subsections. These sums are given by the 
special cases of \eqref{eqn_GenMatrixBasedSumsEnum_SetsAn} where 
when $\mathcal{A}_{1,n} := \{d : 1 \leq d \leq n, (d, n) = 1\}$ and 
$\mathcal{A}_{2,n} := \{d : 1 \leq d \leq n, d | (k, n)\}$ for some $1 \leq k \leq n$, 
respectively. 
\begin{align} 
\label{eqn_sum_vars_TL_defs_intro} 
T_{f}(x) & = \sum_{\substack{d=1 \\ (d,x)=1}}^x f(d) \\ 
\notag 
L_{f,g,k}(x) & = \sum_{d|(k,x)} f(d) g\left(\frac{x}{d}\right) 
\end{align} 
We define the following preliminary constructions for the factorizations of the Lambert-like 
series whose respective expansions involve the sums in \eqref{eqn_sum_vars_TL_defs_intro}. Notice that the 
invertible matrix coefficients, $t_{n,k}$ and $u_{n,k}(f, w)$, in each expansion are defined such 
that the subequations in \eqref{eqn_intro_two_sum_fact_types_def} are correct. 
We will identify precise formulas for these invertible matrices as primary results in the article: 
\begin{subequations} 
\label{eqn_intro_two_sum_fact_types_def} 
\begin{align} 
\label{eqn_intro_two_sum_fact_types_def_v1}
T_{f}(x) & = [q^x]\left(\frac{1}{(q; q)_{\infty}} \sum_{n \geq 2} \sum_{k=1}^n t_{n,k} f(k) \cdot q^n + 
     f(1) \cdot q\right) \\ 
\label{eqn_intro_two_sum_fact_types_def_v2}
g(x) & = [q^x]\left(\frac{1}{(q; q)_{\infty}} \sum_{n \geq 2} \sum_{k=1}^n u_{n,k}(f, w) \left[ 
     \sum_{m=1}^k L_{f,g,m}(k) w^m\right] \cdot q^n\right),\ w \in \mathbb{C}. 
\end{align} 
The sequences $t_{n,k}$ and $u_{n,k}(f, w)$ are lower triangular and 
invertible for suitable choices of the indeterminate parameter $w$. 
For a fixed $N \geq 1$, we can truncate these sequences after $N$ rows 
and form the $N \times N$ matrices whose entries are 
$t_{n,k}$ (respectively, $u_{n,k}(f, w)$) for $1 \leq n,k \leq N$. 
The corresponding inverse matrices have terms denoted by 
$t_{n,k}^{(-1)}$ (and $u_{n,k}^{(-1)}(f, w)$) respectively). 
That is to say, for $n \geq 2$, these inverse matrices satisfy 
\begin{align} 
\label{eqn_intro_two_sum_fact_types_def_v3_invs}
f(n) & = \sum_{k=1}^n t_{n,k}^{(-1)} \cdot [q^k]\left((q; q)_{\infty} \times \sum_{n \geq 1} 
     T_f(n) q^n\right) \\ 
\label{eqn_intro_two_sum_fact_types_def_v4_invs}
\sum_{m=1}^n L_{f,g,m}(k) w^m & = \sum_{k=1}^n u_{n,k}^{(-1)}(f, w) \cdot [q^k] \left((q; q)_{\infty} 
     \times \sum_{n \geq 1} g(n) q^n\right),\ w \in \mathbb{C}.
\end{align} 
Explicit representations for these inverse matrix sequences are proved in the article. 
\end{subequations} 
We focus on the special expansions of each factorization type in 
Section \ref{Section_Sum_TypeI} and Section \ref{Section_Sum_TypeII}, respectively, 
though we note that other related variants of these expansions are possible. 

\begin{theorem}[Exact Formulas for the Factorization Matrix Sequences] 
\label{theorem_snk_snkinv_seq_formulas_v1} 
The lower triangular sequence $t_{n,k}$ is defined by the first expansion 
in \eqref{eqn_intro_two_sum_fact_types_def_v1}. 
The corresponding inverse matrix coefficients are denoted by $t_{n,k}^{(-1)}$. 
For integers $n, k \geq 1$, the two lower triangular 
factorization sequences defining the expansion of 
\eqref{eqn_intro_two_sum_fact_types_def_v1} satisfy exact formulas given by 
\begin{align*} 
\tag{i} 
t_{n,k} & = \sum_{j=0}^{n} (-1)^{\lceil j/2 \rceil} \chi_{1,k}(n+1-G_j) \Iverson{n-G_j \geq 1} \\ 
\tag{ii} 
t_{n,k}^{(-1)} & = \sum_{d=1}^n p(d-k) \mu_{n,d}, 
\end{align*} 
where we define the sequence of interleaved pentagonal numbers $G_j$ 
as in the introduction, and the sequence $\mu_{n,k}$ as in 
Proposition \ref{prof_inversion_formula}. 
\end{theorem}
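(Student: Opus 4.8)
The plan is to prove part (i) by clearing the $(q;q)_{\infty}$ denominator in the defining relation \eqref{eqn_intro_two_sum_fact_types_def_v1} and extracting coefficients. Multiplying both sides by $(q;q)_{\infty}$ converts the factorization into
\[
\sum_{n \geq 2}\left(\sum_{k=1}^n t_{n,k} f(k)\right) q^n = (q;q)_{\infty} \sum_{x \geq 1} T_f(x) q^x - f(1)\, q\, (q;q)_{\infty},
\]
so that $\sum_{k=1}^n t_{n,k} f(k)$ is exactly the coefficient of $q^n$ on the right for each $n \geq 2$. I would then invoke Euler's pentagonal number theorem in the form $(q;q)_{\infty} = \sum_{j \geq 0} (-1)^{\lceil j/2 \rceil} q^{G_j}$, which is precisely the source of the alternating sign $(-1)^{\lceil j/2\rceil}$ and of the interleaved pentagonal exponents $G_j$ appearing in the claimed formula.

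Next I would carry out the Cauchy product. Extracting $[q^n]$ produces a finite alternating combination of the form $\sum_{j}(-1)^{\lceil j/2\rceil} T_f(n - G_j)$ ranging over those $j$ with $G_j$ small enough that $n-G_j \geq 1$, together with the correction coming from the separated $f(1)\,q\,(q;q)_{\infty}$ term. Writing each relatively prime divisor sum through the principal character as $T_f(m) = \sum_{k=1}^m \chi_{1,m}(k) f(k)$, and using the symmetry $\chi_{1,m}(k) = \chi_{1,k}(m) = \Iverson{(m,k)=1}$, I can collect the total coefficient of each $f(k)$. Since $f$ is arbitrary, matching coefficients of $f(k)$ on the two sides determines $t_{n,k}$ uniquely and yields the stated sum over $j$ with the character factor evaluated at the pentagonal-shifted argument. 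The delicate point is the bookkeeping of index ranges: the restriction to $n \geq 2$, the $\Iverson{n - G_j \geq 1}$ cutoff, and in particular the way the pulled-out $f(1)\,q$ term cancels the lowest ($T_f(1)$) contribution are what fix the exact shift inside the character argument and the admissible range of $j$.

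For part (ii) I would use that $t_{n,k}^{(-1)}$ is literally the matrix inverse appearing in \eqref{eqn_intro_two_sum_fact_types_def_v3_invs}, and realize this inverse as a composition of two inversions acting on disjoint structural pieces of the factorization. First, undoing multiplication by $(q;q)_{\infty}$ is the same as multiplying by its reciprocal $1/(q;q)_{\infty} = \sum_{m \geq 0} p(m) q^m$; this recovers the values of $T_f$ from the cleared coefficients through the partition function and is the origin of the $p(d-k)$ factor. Second, recovering $f$ from the relatively prime divisor sums $T_f$ is exactly the M\"obius-type inversion of Proposition \ref{prof_inversion_formula}, whose coefficients are the sequence $\mu_{n,k}$. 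Composing these two steps, i.e.\ writing $f(n) = \sum_{d} \mu_{n,d}\, T_f(d+1)$, substituting the partition-function expansion of each $T_f$ value in terms of the cleared coefficients, and then collecting the coefficient of the $k$-th cleared coefficient, yields the stated double sum $t_{n,k}^{(-1)} = \sum_{d=1}^n p(d-k)\mu_{n,d}$, with the precise argument of $p$ dictated by the same shift bookkeeping.

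I expect the main obstacle to be this index bookkeeping rather than any deep idea: both the argument shift in the character factor in (i) and the precise shift of the partition-function argument in (ii) hinge on reconciling the $n \geq 2$ offset, the separated $f(1)\,q$ term, and the shift built into the definition of $\mu_{n,k}$, which inverts $g(n) = \sum_{(d,n)=1} f(d)$ in the shifted form $f(n) = \sum_{d} g(d+1)\mu_{n,d}$. To guard against off-by-one errors I would verify the resulting closed forms against direct computation of the first several rows (say $n \leq 5$) before asserting the general identities.
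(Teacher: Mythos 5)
Your proposal is correct and follows essentially the same route as the paper: part (i) amounts to convolving the relatively-prime indicator with the pentagonal-number-theorem coefficients of $(q;q)_{\infty}$ and matching coefficients of $f(k)$ for arbitrary $f$ (the paper packages this as Lemma \ref{lemma_cvl_ident_rpints_indicatorfn} plus a generating-function inversion), and part (ii) composes the partition-function inverse of $1/(q;q)_{\infty}$ with the $\mu_{n,k}$ inversion of Proposition \ref{prof_inversion_formula} (the paper realizes the same composition by substituting $f(n) := t_{n,r}^{(-1)}$ to collapse the matrix product to $T_f(n) = p(n-1-r)$). The shift bookkeeping you flag is indeed the only delicate point, and the paper itself is not entirely consistent about it, so your plan to pin it down against small cases is the right safeguard.
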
 

The function $\chi_{1,k}(n)$ defined in the 
glossary section starting on page \pageref{Section_Glossary_NotationConvs} 
refers to the principal Dirichlet character modulo $k$ for some $k \geq 1$. 

\begin{prop}[Formulas for the Inverse Matrix Sequences of $u_{n,k}(f, w)$] 
\label{prop_unk_inverse_matrix} 
For all $n \geq 1$ and $1 \leq k \leq n$, any fixed arithmetic function $f$, 
and $w \in \mathbb{C}$, we have that 
\[
u_{n,k}^{(-1)}(f, w) = \sum_{m=1}^n \left(\sum_{d|(m,n)} f(d) p(n/d-k)\right) w^m. 
\]
\end{prop}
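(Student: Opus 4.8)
The plan is to verify directly that the proposed closed form satisfies the defining inverse relation \eqref{eqn_intro_two_sum_fact_types_def_v4_invs}, and then to invoke the uniqueness of the inverse of the lower triangular, invertible matrix $u_{n,k}(f,w)$. Abbreviate $B_n := \sum_{m=1}^n L_{f,g,m}(n) w^m$ and $c_k := [q^k]\bigl((q;q)_{\infty} \sum_{j \geq 1} g(j) q^j\bigr)$. Then \eqref{eqn_intro_two_sum_fact_types_def_v4_invs} asserts that $B_n = \sum_{k=1}^n u_{n,k}^{(-1)}(f,w) c_k$ holds for every arithmetic function $g$, and since a triangular invertible matrix has a unique inverse, it suffices to show that the stated formula for $u_{n,k}^{(-1)}(f,w)$ reproduces this identity.

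First I would record the key partition convolution that links $g$ to the coefficients $c_k$. Multiplying the series $\sum_{j} c_j q^j = (q;q)_{\infty} \sum_j g(j) q^j$ by $1/(q;q)_{\infty} = \sum_{d \geq 0} p(d) q^d$ and extracting the coefficient of $q^x$ gives
\[
g(x) = \sum_{k=1}^x p(x-k) c_k,
\]
where I use the conventions $p(0) = 1$ and $p(m) = 0$ for $m < 0$.

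Next I would substitute the proposed formula into $\sum_{k=1}^n u_{n,k}^{(-1)}(f,w) c_k$ and interchange the order of summation so that the sum over $k$ acts only on the partition factor:
\[
\sum_{k=1}^n u_{n,k}^{(-1)}(f,w) c_k = \sum_{m=1}^n w^m \sum_{d \mid (m,n)} f(d) \sum_{k=1}^n p(n/d - k) c_k.
\]
Because $d \mid (m,n)$ forces $d \mid n$, the quantity $n/d$ is a positive integer, and since $p(n/d - k) = 0$ for $k > n/d$ the inner sum may be truncated at $k = n/d$ and collapses, via the displayed convolution, to $g(n/d)$. The right-hand side then reads $\sum_{m=1}^n w^m \sum_{d\mid(m,n)} f(d) g(n/d)$, and recognizing the inner divisor sum as the type II sum $L_{f,g,m}(n) = \sum_{d\mid(m,n)} f(d) g(n/d)$ identifies this with $\sum_{m=1}^n L_{f,g,m}(n) w^m = B_n$, as required.

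I expect the main obstacle to be purely bookkeeping: correctly tracking the three nested indices $m$, $d$, and $k$ through the interchange of summation, and justifying the truncation $k \le n/d$ that lets the inner sum collapse to the single value $g(n/d)$ rather than a partial partition convolution. One should also confirm that this collapse is legitimate for every $g$ simultaneously; this holds because the map $g \mapsto (c_1, \ldots, c_n)$ is itself given by a unitriangular (hence invertible) transform, so establishing the identity through the relation $g(x) = \sum_k p(x-k) c_k$ is equivalent to establishing it for all admissible coefficient vectors $(c_k)$, which in turn pins down the unique inverse matrix.
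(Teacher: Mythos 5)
Your proposal is correct and rests on the same key computation as the paper's proof, namely the partition convolution $g(x)=\sum_{k}p(x-k)c_k$ that inverts the $(q;q)_{\infty}$ factor in \eqref{eqn_intro_two_sum_fact_types_def_v2}. The only difference is direction: the paper \emph{derives} the formula by choosing $g(n):=p(n-r)$ so that $c=e_r$ and the $r$-th column of the inverse is read off as $\sum_{m}L_{f,g,m}(n)w^m$, whereas you \emph{verify} the stated formula against \eqref{eqn_intro_two_sum_fact_types_def_v4_invs} for arbitrary $g$ and appeal to uniqueness of the inverse --- a marginally more careful rendering of the same argument.
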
 

Another formulation of the expression for the inverse sequence in the 
previous proposition is proved in 
Proposition \ref{prop_another_matrix_formula} using constructions 
we define in the subsections leading up to that result. 

\subsection{A summary of applications of our new results} 

\subsubsection{Forms of the type I sums} 

The identities and theorems we prove for the general sum case defined by
\eqref{eqn_sum_vars_TL_defs_intro} in Section \ref{Section_Sum_TypeI} can be useful in constructing new 
undiscovered identities for well-known functions. These
expansions which are phrased in terms of our new matrix factorization sequences are not well explored
yet. So these expansions may yield additional information on the properties of the well-known function cases, 
which are examples of the type I sums. We give a few notable examples of summation identities which express 
classical functions and combinatorial objects in new ways below. These ways are to illustrate the style of an 
application of our new formulas. The concrete examples we cite below also serve to motivate the methods behind 
our new matrix factorizations for the special type I sums. We will set out to prove these identities in more 
generality in later sections of this article.

We obtain the following identities for 
Euler's totient function based on our new constructions: 
\begin{align*} 
\phi(n) & = \sum_{j=0}^n \sum_{k=1}^{j-1} \sum_{i=0}^{j} 
     p(n-j) (-1)^{\lceil i/2 \rceil} \chi_{1,k}(j-k-G_i) \Iverson{j-k-G_i \geq 1} + 
     \Iverson{n=1} \\ 
\phi(n) & = \sum_{\substack{d=1 \\ (d,n)=1}}^n 
     \left(\sum_{k=1}^{d+1} \sum_{i=1}^d \sum_{j=0}^k 
     p(i+1-k) (-1)^{\lceil j/2 \rceil} \phi(k-G_j) \mu_{d,i} \Iverson{k-G_j \geq 1}\right). 
\end{align*} 
To give another related example that applies to classical multiplicative functions, 
recall that we have a known representation for the M\"obius function given as an 
exponential sum in terms of powers of the 
$n^{th}$ primitive roots of unity of the form \citep[\S 16.6]{HARDYWRIGHT} 
\[ 
\mu(n) = \sum_{\substack{d=1 \\ (d,n)=1}}^n \exp\left(2\pi\imath \frac{d}{n}\right). 
\] 
The \emph{Mertens function}, $M(x)$, is defined as the summatory function over the 
M\"obius function $\mu(n)$ for all $n \leq x$. Using the definition of the M\"obius function as one of our 
type I sums defined above, we have new expansions for the Mertens function given by 
(\cf Corollary \ref{cor_Mertens_function_v2}) 
\[
M(x) = \sum_{1 \leq k < j \leq n \leq x} \left(\sum_{i=0}^j p(n-j) (-1)^{\lceil i/2 \rceil} 
     \chi_{1,k}(j-k-G_i) \Iverson{j-k-G_i \geq 1} e^{2\pi\imath k / n}\right). 
\] 
Finally, we can form another related polynomial sum of the type indicated above 
when we consider that the logarithm of the \emph{cyclotomic polynomials} leads to the sums 
\begin{align*} 
\log \Phi_n(z) & = \sum_{\substack{1 \leq k \leq n \\ (k, n) = 1}} 
     \log\left(z-e^{2\pi\imath k / n}\right) \\ 
     & = 
     \sum_{1 \leq k < j \leq n} \left(\sum_{i=0}^j p(n-j) (-1)^{\lceil i/2 \rceil} 
     \chi_{1,k}(j-k-G_i) \Iverson{j-k-G_i \geq 1} 
     \log\left(z-e^{2\pi\imath k / n}\right)
     \right). 
\end{align*} 

\subsubsection{Forms of the type II sums} 

The sums $L_{f,g,k}(n)$ are sometimes refered to as \emph{Anderson-Apostol sums} named 
after the authors who first defined them 
(\cf \cite[\S 8.3]{APOSTOL-ANUMT} \cite{APOSTOL-APGRS}). 
Other variants and generalizations of these sums are studied in the 
references \cite{IKEDA,KIUCHI-AVGS}. 
There are many number theoretic applications of the periodic sums factorized in this form. 
For example, the famous expansion of Ramanujan's sum $c_q(n)$ is expressed as the 
following right-hand-side divisor sum \cite[\S IX]{RAMANUJAN}: 
\[
c_q(n) = \sum_{\substack{d=1 \\ (d, n)=1}}^n 
     e^{2\pi\imath dn / q} = \sum_{d|(q,n)} d \cdot \mu(q/d). 
\] 
The applications of our new results to Ramanujan's sum include the 
expansions 
\begin{align*} 
c_n(x) & = [w^{x}]\left(\sum_{k=1}^n u_{n,k}^{(-1)}(\mu, w) \sum_{j \geq 0} 
     (-1)^{\lceil j/2 \rceil} \mu(k-G_j)\right) \\ 
     & = 
     \sum_{k=1}^n \left(\sum_{d|(n,x)} d \cdot p(n/d-k)\right) \sum_{j \geq 0} 
     (-1)^{\lceil j/2 \rceil} \mu(k-G_j), 
\end{align*} 
where the inverse matrices $u_{n,k}^{(-1)}(\mu, w)$ 
are expanded according to Proposition \ref{prop_unk_inverse_matrix}. 
We then immediately have the following new results for the next special expansions of the 
generalized sum-of-divisors functions when $\Re(s) > 0$: 
\begin{align*} 
\sigma_s(n) & = n^s \zeta(s+1) \times \sum_{i=1}^{\infty} \sum_{k=1}^i \left(\sum_{d|(n,i)} 
     d \cdot p(i/d-k)\right) \sum_{j \geq 0} \frac{(-1)^{\lceil j/2 \rceil} \mu(k-G_j)}{i^{s+1}} \\ 
     & = 
     n^s \zeta(s+1) \times \sum_{i=1}^n \sum_{k \geq 0} \frac{\mu\left(\frac{nk+i}{(n,i)}\right)}{(nk+i)^{s+1}}. 
\end{align*} 
Section \ref{subSection_DFT_exps} expands the left-hand-side function 
$g(x)$ in \eqref{eqn_intro_two_sum_fact_types_def_v2} by considering a new 
indirect method involving the type II sums $L_{f,g,k}(n)$. The expansions we derive in that section 
employ results for discrete Fourier 
transforms of functions of the greatest common divisor studied in 
\cite{kamp-gcd-transform,SCHRAMM}. 
This method allows us to study the factorization forms in 
\eqref{eqn_intro_two_sum_fact_types_def_v2} where we effectively bypass the complicated forms of the 
ordinary matrix coefficients $u_{n,k}(f, w)$. Results enumerating the ordinary matrices with coefficients 
given by $u_{n,k}(f, w)$ are treated in 
Corollary \ref{cor_unkfw_ord_matrix_formula_v1} of 
Section \ref{subSection_typeII_inv_matrices}. 
The discrete Fourier series methods we use to prove our theorems in these sections 
lead to the next key result proved in Theorem \ref{theorem_main_lfgmn_exp_v1}
which states that 
\[
\sum_{d|k} \sum_{r=0}^{k-1} d \cdot L_{f,g,r}(k) \e{-\frac{rd}{k}} \mu(k/d) = 
  \sum_{d|k} \phi(d) f(d) (k/d)^2 g(k/d),
\] 
where $\e{x} = \exp(2\pi\imath \cdot x)$ is standard notation for the 
complex exponential function. 

\subsection{Significance of our new results} 

Our new results provide generating function expansions for the type I and type II 
sums in the form of matrix-based factorization theorems. 
The matrix products involved in expressing the coefficients of these generating 
functions for arbitrary arithmetic functions $f$ and $g$ are closely related to the 
partition function $p(n)$. The known Lambert series factorization theorems 
proved in the references and which are summarized in 
the subsections on variants above 
demonstrate the flavor of the matrix-based expansions of these forms for 
ordinary divisor sums of the form $\sum_{d|n} f(d)$. 
Our extensions of these factorization theorem approached in the context of the 
new forms of the type I and type II sums similarly relate special arithmetic functions 
in number theory to partition functions and more additive branches of number theory. 
The last results proved in 
Section \ref{subSection_DFT_exps} are expanded in the spirit of these 
matrix factorization constructions using discrete Fourier transforms of functions 
(and sums of functions) evaluated at greatest common divisors. We pay special 
attention to illustrating our new results with many relevant examples and 
new identities expanding famous special number theoretic functions throughout the 
article. 

\section{Factorization theorems for sums of the first type} 
\label{Section_Sum_TypeI}

\subsection{Inversion relations} 

We begin our exploration here by expanding an inversion formula which is analogous to 
M\"obius inversion for ordinary divisor sums. 
We prove the following result which is the analog to the sequence 
inversion relation provided by the M\"obius transform in the context of our sums over the 
integers relatively prime to $n$ \cite[\cf \S 2, \S 3]{RIORDAN-COMBIDENTS}. 

\begin{prop}[Inversion Formula] 
\label{prof_inversion_formula}
For all $n \geq 2$, there is a unique lower triangular sequence, denoted by $\mu_{n,k}$, 
which satisfies the lower triangular inversion relation (i.e., so that $\mu_{n,d} = 0$ whenever $n < d$) 
\begin{subequations}
\begin{equation}
\label{eqn_propInvFormula_v1}
g(n) = \sum_{\substack{d=1 \\ (d, n)=1}}^n 
     f(d) \quad\iff\quad f(n) = \sum_{d=1}^n g(d+1) \mu_{n,d}. 
\end{equation}
Moreover, if we form the matrix $(\mu_{i,j} \Iverson{j \leq i})_{1 \leq i,j \leq n}$ for any $n \geq 2$, we have that the 
inverse sequence satisfies 
\begin{equation} 
\label{eqn_propInvFormula_v2}
\mu_{n,k}^{(-1)} = \Iverson{(n+1, k) = 1} \Iverson{k \leq n}. 
\end{equation} 
\end{subequations} 
\end{prop}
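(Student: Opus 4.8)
The plan is to recast the forward relation as a linear system whose coefficient matrix, after a single shift of the index on $g$, becomes unit lower triangular; existence, uniqueness, and the explicit inverse formula \eqref{eqn_propInvFormula_v2} then all follow from elementary facts about triangular matrices. First I would write the defining sum in terms of the principal Dirichlet character as $g(n) = \sum_{d=1}^n \chi_{1,n}(d) f(d) = \sum_{d=1}^n A_{n,d} f(d)$, where $A_{n,d} := \Iverson{(d,n)=1}\Iverson{d \leq n}$. The decisive structural observation is that the diagonal of $A$ vanishes for $n \geq 2$: since $(n,n) = n > 1$ we have $A_{n,n} = \Iverson{(n,n)=1} = 0$, so $A$ is lower triangular but singular and cannot be inverted directly. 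This is precisely the reason the claimed inversion involves the shifted values $g(d+1)$ rather than $g(d)$.

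Next I would carry out the index shift. Because $A_{n+1,n+1} = 0$, the highest term drops out of the sum defining $g(n+1)$, leaving $g(n+1) = \sum_{d=1}^{n} \Iverson{(d,n+1)=1} f(d) = \sum_{d=1}^{n} B_{n,d} f(d)$ with $B_{n,d} := \Iverson{(n+1,d)=1}\Iverson{d \leq n}$. The point of the shift is that $B$ is now unit lower triangular: its diagonal entry is $B_{n,n} = \Iverson{(n+1,n)=1} = 1$, since consecutive integers are always coprime. Hence each truncation of $B$ has determinant $1$, is invertible over $\mathbb{Z}$, and its inverse is again lower triangular.

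I would then define $\mu_{n,k}$ to be the entries of $B^{-1}$. A unit lower triangular matrix has a unique lower triangular inverse, which supplies both the existence and the uniqueness of the sequence $\mu_{n,k}$; inverting $g(n+1) = \sum_{d} B_{n,d} f(d)$ gives $f(n) = \sum_{d=1}^n \mu_{n,d}\, g(d+1)$, and running the identity in both directions (using that $B$ and $B^{-1}$ are mutually inverse bijections on sequences) establishes the equivalence \eqref{eqn_propInvFormula_v1}, with the trivial base value $g(1) = f(1)$ handled separately. Finally, since $\mu = B^{-1}$ we read off $\mu_{n,k}^{(-1)} = B_{n,k} = \Iverson{(n+1,k)=1}\Iverson{k \leq n}$, which is exactly \eqref{eqn_propInvFormula_v2}; compatibility with the stated truncation to an $n \times n$ matrix is automatic, because for lower triangular matrices the inverse of the top-left block equals the top-left block of the inverse.

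The only genuine subtlety --- the step I expect to be the main obstacle for a direct approach --- is the index shift itself: one must notice that the natural matrix $A$ has zero diagonal, hence is not invertible, and that replacing $g(n)$ by $g(n+1)$ trades the vanishing diagonal pair $(n,n)$ for the always-coprime pair $(n,n+1)$, restoring a unit diagonal. Once this reindexing is in place, the remainder is the standard theory of unit lower triangular matrices, and no delicate estimates or generating-function manipulations are required.
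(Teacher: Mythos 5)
Your proposal is correct and follows essentially the same route as the paper: both arguments recast the type I sum as a linear system, exploit that consecutive integers are coprime so that the index shift $g(n)\mapsto g(n+1)$ produces a unit lower triangular coefficient matrix, and then invoke the standard invertibility of such matrices to get existence, uniqueness, and the formula $\mu_{n,k}^{(-1)}=\Iverson{(n+1,k)=1}\Iverson{k\le n}$. Your write-up is in fact somewhat more explicit than the paper's (which states the shifted matrix directly without spelling out that the unshifted matrix has vanishing diagonal), but the underlying idea is identical.
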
 
\begin{proof} 
Consider the $(n-1) \times (n-1)$ matrix 
\begin{equation}
\label{eqn_InvRel_proof_matrix_def_v1} 
\left(\Iverson{(i,j-1)=1\mathrm{\ and \ } j \leq i}\right)_{1 \leq i,j < n},
\end{equation} 
which effectively corresponds to the formula on the left-hand-side of \eqref{eqn_propInvFormula_v1} by 
applying the matrix to the vector of $[f(1)\ f(2)\ \cdots f(n)]^{T}$ and extracting the 
$(n+1)^{th}$ column of the matrix formed by extracting the $\{0,1\}$-valued coefficients of $f(d)$. Since $\gcd(i, j-1)=1$ for all $i = j$ with 
$i,j \geq 1$, we see that the 
matrix \eqref{eqn_InvRel_proof_matrix_def_v1} 
is lower triangular with ones on its diagonal. Thus the matrix is non-singular and its 
unique inverse, which we denote by $(\mu_{i,j})_{1 \leq i,j < n}$, leads to the sum on the 
right-hand-side of the sum in \eqref{eqn_propInvFormula_v1} when we shift $n \mapsto n+1$. 
The second equation stated in \eqref{eqn_propInvFormula_v2} 
restates the form of the first matrix of $\mu_{i,j}$ 
as on the right-hand-side of \eqref{eqn_propInvFormula_v1}. 
\end{proof}

\begin{figure}[ht!]

\begin{minipage}{\linewidth} 
\begin{center} 
\small
\begin{equation*} 
\boxed{ 
\begin{array}{ccccccccccccccccc}
 1 & 0 & 0 & 0 & 0 & 0 & 0 & 0 & 0 & 0 & 0 & 0 & 0 & 0 & 0 & 0 & 0 \\
 -1 & 1 & 0 & 0 & 0 & 0 & 0 & 0 & 0 & 0 & 0 & 0 & 0 & 0 & 0 & 0 & 0 \\
 -1 & 0 & 1 & 0 & 0 & 0 & 0 & 0 & 0 & 0 & 0 & 0 & 0 & 0 & 0 & 0 & 0 \\
 1 & -1 & -1 & 1 & 0 & 0 & 0 & 0 & 0 & 0 & 0 & 0 & 0 & 0 & 0 & 0 & 0 \\
 -1 & 0 & 0 & 0 & 1 & 0 & 0 & 0 & 0 & 0 & 0 & 0 & 0 & 0 & 0 & 0 & 0 \\
 1 & 0 & 0 & -1 & -1 & 1 & 0 & 0 & 0 & 0 & 0 & 0 & 0 & 0 & 0 & 0 & 0 \\
 1 & 0 & -1 & 0 & -1 & 0 & 1 & 0 & 0 & 0 & 0 & 0 & 0 & 0 & 0 & 0 & 0 \\
 -1 & 0 & 2 & -1 & 0 & 0 & -1 & 1 & 0 & 0 & 0 & 0 & 0 & 0 & 0 & 0 & 0 \\
 -1 & 0 & 0 & 0 & 1 & 0 & -1 & 0 & 1 & 0 & 0 & 0 & 0 & 0 & 0 & 0 & 0 \\
 1 & 0 & -1 & 1 & 0 & -1 & 1 & -1 & -1 & 1 & 0 & 0 & 0 & 0 & 0 & 0 & 0 \\
 -1 & 0 & 1 & 0 & 0 & 0 & -1 & 0 & 0 & 0 & 1 & 0 & 0 & 0 & 0 & 0 & 0 \\
 1 & 0 & -1 & 0 & 0 & 0 & 1 & 0 & 0 & -1 & -1 & 1 & 0 & 0 & 0 & 0 & 0 \\
 3 & 0 & -2 & 0 & -2 & 0 & 2 & 0 & -1 & 0 & -1 & 0 & 1 & 0 & 0 & 0 & 0 \\
 -3 & 0 & 1 & 0 & 3 & 0 & -1 & -1 & 1 & 0 & 0 & 0 & -1 & 1 & 0 & 0 & 0 \\
 -1 & 0 & 1 & 0 & 1 & 0 & -1 & 0 & 0 & 0 & 0 & 0 & -1 & 0 & 1 & 0 & 0 \\
 1 & 0 & 0 & 0 & -2 & 0 & 0 & 1 & 0 & 0 & 1 & -1 & 1 & -1 & -1 & 1 & 0 \\
 -3 & 0 & 2 & 0 & 2 & 0 & -2 & 0 & 1 & 0 & 0 & 0 & -1 & 0 & 0 & 0 & 1 \\
\end{array}
}
\end{equation*}
\end{center} 
\subcaption*{$\mu_{n,k}$ for $1 \leq n,k < 18$} 
\end{minipage} 

\caption{Inversion formula coefficient sequences } 
\label{figure_munk_and_inv_seqs} 

\end{figure} 

\begin{remark} 
Figure \ref{figure_munk_and_inv_seqs} provides a listing of the relevant analogs to the role of the 
M\"obius function in a M\"obius inversion transform of the ordinary divisor sum over 
an arithmetic function. 
We do not know of a comparatively simple closed-form function for the sequence of 
$\mu_{n,k}$ \cite[\cf A096433]{OEIS}. However, we readily see by construction that the 
sequence and its inverse satisfy 
\begin{align*} 
\sum_{\substack{d=1 \\ (d,n)=1}}^n \mu_{d,k} & = 0 \\ 
\sum_{\substack{d=1 \\ (d,n)=1}}^n \mu_{d,k}^{(-1)} & = \phi(n), 
\end{align*} 
where $\phi(n)$ is Euler's totient function. 
The first columns of the $\mu_{n,1}$ appear in the integer sequences 
database as the entry \cite[A096433]{OEIS}. 
\end{remark} 

\subsection{Exact formulas for the factorization matrices} 

The next result is key to proving the exact formulas for the matrix sequences, 
$t_{n,k}$ and $t_{n,k}^{(-1)}$, and their expansions by the partition functions defined 
in the introduction. We prove the following result
first as a lemma which we will use in the proof of 
Theorem \ref{theorem_snk_snkinv_seq_formulas_v1} given below. 
The first several rows of the matrix sequence $t_{n,k}$ and its inverse 
implicit to the factorization theorem in \eqref{eqn_intro_two_sum_fact_types_def} are tabulated in 
Figure \ref{figure_lseriesfact_s3nk_tables} for intuition on the 
formulas we prove in the next proposition and following theorem. 

\begin{lemma}[A Convolution Identity for Relatively Prime Integers] 
\label{lemma_cvl_ident_rpints_indicatorfn} 
For all natural numbers $n \geq 2$ and $k \geq 1$ with $k \leq n$, we have the 
following expression for the principal Dirichlet character modulo $k$: 
\[ 
\sum_{j=1}^n t_{j,k} p(n-j) = \chi_{1,k}(n). 
\] 
Equivalently, we have that 
\begin{align} 
\label{eqn_Proof_cvl_ident_rpints_indicatorfn_EQ1}
t_{n,k} & = \sum_{i=0}^n (-1)^{\lceil i/2 \rceil} \chi_{1,k}(n-G_i) \Iverson{n-G_i \geq k+1} \\ 
\notag
     & = 
     \sum_{b = \pm 1} \left[\sum_{i=0}^{\floor{\frac{\sqrt{24(n-k-1)+1}-b}{6}}} (-1)^{\ceiling{i/2}} \chi_{1,k}\left(n - \frac{i(3i-b)}{2}\right)
     \right].  
\end{align} 
\end{lemma}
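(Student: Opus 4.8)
The plan is to extract everything from the defining factorization \eqref{eqn_intro_two_sum_fact_types_def_v1} of the sequence $t_{n,k}$. Because \eqref{eqn_intro_two_sum_fact_types_def_v1} is an identity of formal power series that is \emph{linear} in the values $f(1), f(2), \ldots$, I would compare the coefficients of $f(k)$ on its two sides (equivalently, specialize $f$ to the indicator $e_k(j) := \Iverson{j=k}$). On the left, the coefficient of $f(k)$ in $\sum_{x \geq 1} T_f(x) q^x = \sum_{x \geq 1} \sum_{d=1,\,(d,x)=1}^x f(d)\, q^x$ is the series $\Lambda_k(q) := \sum_{x \geq k} \chi_{1,k}(x) q^x$, using $\chi_{1,k}(x) = \Iverson{(x,k)=1}$; on the right it is $\tfrac{1}{(q;q)_{\infty}}\bigl(\sum_{n \geq 2} t_{n,k} q^n + \Iverson{k=1}\, q\bigr)$. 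Absorbing the exceptional linear term as $t_{1,1} := 1$, this says precisely that $\tfrac{1}{(q;q)_{\infty}} \sum_{j \geq 1} t_{j,k} q^j = \Lambda_k(q)$.

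Reading off the coefficient of $q^n$ in this identity gives the first claim at once: $\sum_{j=1}^n t_{j,k}\, p(n-j) = [q^n]\Lambda_k(q) = \chi_{1,k}(n)$ for every $n \geq k$, since $\Lambda_k$ contains $q^n$ with coefficient $\chi_{1,k}(n)$ as soon as $n \geq k$. Here I would record the boundary behaviour that governs the cutoffs below: for $k \geq 2$ the term $x = k$ is automatically absent because $\chi_{1,k}(k) = \Iverson{(k,k)=1} = 0$, so $\Lambda_k$ effectively begins at $q^{k+1}$, whereas for $k = 1$ the constant character keeps the $q^1$ term, which is exactly the summand carried by $t_{1,1} = 1$.

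For the equivalent closed form I would clear the partition denominator, multiplying through by $(q;q)_{\infty}$ to get $\sum_{j \geq 1} t_{j,k} q^j = (q;q)_{\infty}\, \Lambda_k(q)$, and then insert the pentagonal number theorem in its interleaved form $(q;q)_{\infty} = \sum_{i \geq 0} (-1)^{\lceil i/2 \rceil} q^{G_i}$. Extracting $[q^n]$ then yields
\[
t_{n,k} = \sum_{i \geq 0} (-1)^{\lceil i/2 \rceil} \chi_{1,k}(n - G_i)\, \Iverson{n - G_i \geq k+1},
\]
the lower cutoff $k+1$ being the manifestation of the boundary observation above. To reach the fully explicit double sum, I would split the generalized pentagonal numbers into their two quadratic progressions $G = \tfrac{i(3i-b)}{2}$ with $b = \pm 1$ and $i \geq 0$, and convert the truncating condition $n - G_i \geq k+1$, i.e. $\tfrac{i(3i-b)}{2} \leq n-k-1$, into an explicit index bound by solving $3i^2 - bi - 2(n-k-1) \leq 0$; its positive root delivers the stated limit $\floor{\tfrac{\sqrt{24(n-k-1)+1}-b}{6}}$.

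The analytic content is light once the factorization is available, so the genuine obstacle is the combinatorial bookkeeping of this last re-indexing: one must match the interleaved sign $(-1)^{\lceil i/2 \rceil}$ to the per-progression index, take care that the shared base point $G_0 = 0$ (which occurs once in the interleaved sum but is produced by $i = 0$ in \emph{both} progressions) is not double counted, and check that the $k = 1$ endpoint is handled uniformly. These sign- and index-accounting checks are where I expect all the care to be needed.
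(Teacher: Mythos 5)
Your proposal is correct and follows essentially the same route as the paper: both arguments extract the coefficient of $f(k)$ from the defining factorization \eqref{eqn_intro_two_sum_fact_types_def_v1} (the paper phrases this via the matrix $\mu_{n,k}^{(-1)} = \Iverson{(n+1,k)=1}$ and the Cauchy product with $p(n-j)$, you via direct specialization to an indicator function), and both then obtain the closed form by multiplying through by $(q;q)_{\infty}$ and invoking the pentagonal number theorem. Your extra attention to the $q^k$ boundary term and the $b=\pm 1$ re-indexing is sound and, if anything, more careful than the paper's one-line appeal to ``a generating function argument.''
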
 
\begin{proof} 
We begin by noticing that the right-hand-side expression in the statement of the lemma 
is equal to $\mu_{n,k}^{(-1)}$ by the construction of the sequence in 
Proposition \ref{prof_inversion_formula}. 
Next, we see that the factorization in 
\eqref{eqn_intro_two_sum_fact_types_def_v1} is equivalent to the 
expansion 
\begin{align}
\label{eqn_tnk_formula_proof_formula_v2}  
\sum_{d=1}^{n-1} f(d) \mu_{n,d}^{(-1)} & = 
     \sum_{j=1}^n \sum_{k=1}^{j} p(n-j) t_{j,k} \cdot f(k). 
\end{align} 
Since $\mu_{n,k}^{(-1)} = \Iverson{(n+1, k) = 1}$, we may take the coefficients of $f(k)$ on 
each side of \eqref{eqn_tnk_formula_proof_formula_v2} 
for each $1 \leq k < n$ to establish the result we have claimed in this lemma. 
The equivalent statement of the result follows by a generating function argument 
applied to the product that generates the left-hand-side Cauchy product in 
\eqref{eqn_Proof_cvl_ident_rpints_indicatorfn_EQ1}. 
\end{proof} 

\begin{figure}[ht!]

\begin{minipage}{\linewidth} 
\begin{center} 
\small
\begin{equation*} 
\boxed{ 
\begin{array}{cccccccccccccc}
1 & \\
0 & 1 & \\
-1 & -1 & 1 & \\
-1 & 0 & 0 & 1 & \\
-1 & -1 & -2 & -1 & 1 & \\
0 & 0 & 0 & 0 & 0 & 1 & \\
0 & 0 & 0 & -1 & -1 & -1 & 1 & \\
1 & 0 & -1 & 0 & -1 & -1 & 0 & 1 & \\
1 & 1 & 1 & 0 & -2 & 0 & -1 & -1 & 1 & \\
1 & 0 & 1 & 0 & 1 & 1 & -1 & 0 & 0 & 1 & \\
1 & 1 & 0 & 1 & 1 & 0 & -1 & -1 & -2 & -1 & 1 & \\
1 & 0 & 1 & 0 & 1 & 0 & 0 & 0 & 0 & 0 & 0 & 1 & \\
0 & 1 & 1 & 1 & 1 & 0 & -1 & 0 & 0 & -1 & -1 & -1 & 1 & \\
0 & -1 & 0 & 0 & -1 & -1 & 2 & 0 & -1 & -1 & -1 & -1 & 0 & 1 \\
\end{array}
}
\end{equation*}
\end{center} 
\subcaption*{\rm{(i)} $t_{n,k}$} 
\end{minipage} 

\begin{minipage}{\linewidth} 
\begin{center} 
\small
\begin{equation*} 
\boxed{ 
\begin{array}{ccccccccccccc}
 1 & \\
 0 & 1 & \\
 1 & 1 & 1 & \\
 1 & 0 & 0 & 1 & \\
 4 & 3 & 2 & 1 & 1 & \\
 0 & 0 & 0 & 0 & 0 & 1 & \\
 5 & 3 & 2 & 2 & 1 & 1 & 1 & \\
 4 & 4 & 3 & 1 & 1 & 1 & 0 & 1 & \\
 15 & 11 & 8 & 5 & 4 & 2 & 1 & 1 & 1 & \\
 -1 & -1 & -1 & 1 & 0 & 0 & 1 & 0 & 0 & 1 & \\
 32 & 24 & 18 & 12 & 9 & 6 & 4 & 3 & 2 & 1 & 1 & \\
 -6 & -4 & -3 & -1 & -1 & 0 & 0 & 0 & 0 & 0 & 0 & 1 & \\
 24 & 17 & 13 & 12 & 8 & 7 & 6 & 3 & 2 & 2 & 1 & 1 & 1 \\
\end{array}
}
\end{equation*}
\end{center} 
\subcaption*{\rm{(ii)} $t_{n,k}^{(-1)}$} 
\end{minipage}

\caption{The factorization matrices, $t_{n,k}$ and $t_{n,k}^{(-1)}$, for 
         $1 \leq n,k < 14$} 
\label{figure_lseriesfact_s3nk_tables} 

\end{figure} 

\begin{proof}[Theorem \ref{theorem_snk_snkinv_seq_formulas_v1}: Proof of (i)]  
It is plain to see by the considerations in our construction of the factorization theorem that 
both matrix sequences are lower triangular. Thus, we need only consider the cases where 
$n \leq k$. By a convolution of generating functions, the identity in 
Lemma \ref{lemma_cvl_ident_rpints_indicatorfn} shows that 
\[ 
t_{n,k} = \sum_{j=k}^n [q^{n-j}] (q; q)_{\infty} \cdot \Iverson{(j+1, k) = 1}. 
\] 
Then shifting the index of summation in the previous equation implies (i). 
\end{proof} 
\begin{proof}[Theorem \ref{theorem_snk_snkinv_seq_formulas_v1}: Proof of (ii)] 
To prove (ii), we consider the factorization theorem when $f(n) := t_{n,r}^{(-1)}$ for some 
fixed $r \geq 1$. We then expand \eqref{eqn_intro_two_sum_fact_types_def_v1} as 
\begin{align*} 
\sum_{\substack{d=1 \\ (d,n)=1}}^n 
     t_{d,r}^{(-1)} & = [q^n] \frac{1}{(q; q)_{\infty}} \sum_{n \geq 1} \sum_{k=1}^{n-1} 
     t_{n,k} \cdot t_{k,r}^{(-1)} \cdot q^n \\ 
     & = 
     \sum_{j=1}^n p(n-j) \times \sum_{k=1}^{j-1} t_{j,k} t_{k,r}^{(-1)} \\ 
     & = 
     \sum_{j=1}^n p(n-j) \Iverson{r=j-1} \\ 
     & = 
     p(n-1-r). 
\end{align*} 
Hence we may perform the inversion by Proposition \ref{prof_inversion_formula} to the 
left-hand-side sum in the previous equations to obtain our stated result. 
\end{proof} 

\begin{remark}[Relations to the Lambert Series Factorization Theorems]
We notice that by inclusion-exclusion applied to the right-hand-side of 
\eqref{eqn_intro_two_sum_fact_types_def_v1}, 
we may write our matrices $t_{n,k}$ in terms of the triangular 
sequence expanded as differences of restricted partitions in 
\eqref{eqn_LambertSeriesFactThm_v1}. For example, when $k := 12$ we see that 
\[
\sum_{n \geq 12} \Iverson{(n, 12) = 1} q^n = \frac{q^{12}}{1-q} - \frac{q^{12}}{1-q^2} - 
     \frac{q^{12}}{1-q^3} + \frac{q^{12}}{1-q^6}. 
\]
In general, when $k > 1$ we can expand 
\[
\sum_{n \geq k} \Iverson{(n, k) = 1} q^n = \sum_{d|k} \frac{q^k \mu(d)}{1-q^d}. 
\]
Thus we can relate the triangles $t_{n,k}$ in this article to the 
$s_{n,k} = [q^n] (q; q)_{\infty} q^k / (1-q^k)$ for $n \geq k \geq 1$ employed in the 
expansions from the references as follows: 
\[
t_{n,k} = \begin{cases} 
     s_{n,k}, & k = 1; \\ 
     \sum\limits_{d|k} s_{n+1-k+d,d} \cdot \mu(d), & k > 1. 
     \end{cases} 
\]
\end{remark} 

\subsection{Completing the proofs of the main applications} 

We remark that as in the Lambert series factorization results from the references \cite{MERCA-LSFACTTHM}, 
we have three primary types of expansion identities that we will consider for 
any fixed choice of the arithmetic function $f$ in the forms of 
\begin{subequations} 
\label{eqn_factthm_exp_idents_v1} 
\begin{align} 
\sum_{\substack{d=1 \\ (d,n)=1}}^n 
     f(d) & = \sum_{j=1}^n \sum_{k=1}^{j-1} p(n-j) t_{j-1,k} f(k) + 
     f(1) \Iverson{n = 1} \\ 
\sum_{k=1}^{n-1} t_{n-1,k} f(k) & = \sum_{j=1}^n \sum_{\substack{d=1 \\ (d,j)=1}}^j 
     [q^{n-j}] (q; q)_{\infty} \cdot f(d) - [q^{n-1}](q; q)_{\infty} \cdot f(1), 
\end{align} 
and the 
corresponding inverted formula providing that 
\begin{align} 
\label{eqn_factthm_exp_idents_v1_eq.III} 
f(n) & = \sum_{k=1}^{n} t_{n,k}^{(-1)}\left( 
     \sum_{\substack{j \geq 0 \\ k+1-G_j > 0}} (-1)^{\ceiling{\frac{j}{2}}} 
     T_f(k+1-G_j) - [q^k] (q; q)_{\infty} \cdot f(1)\right). 
\end{align} 
\end{subequations}
Now the applications cited in the introduction follow immediately and require no 
further proof other than to cite these results for the respective special cases of $f$. 
We provide other similar 
corollaries and examples of these factorization theorem results below. 

\begin{example}[Sum-of-Divisors Functions] 
For any $\alpha \in \mathbb{C}$, the expansion identity given in 
\eqref{eqn_factthm_exp_idents_v1_eq.III} also implies the following new formula for the 
generalized sum-of-divisors functions, $\sigma_{\alpha} = \sum_{d|n} d^{\alpha}$: 
\[
\sigma_{\alpha}(n) = \sum_{d|n} \sum_{k=1}^{d} t_{d,k}^{(-1)}\left( 
     \sum_{\substack{j \geq 0 \\ k+1-G_j > 0}} (-1)^{\ceiling{\frac{j}{2}}} 
     \phi_{\alpha}(k+1-G_j) - [q^k] (q; q)_{\infty}\right). 
\] 
In particular, when $\alpha := 0$ we obtain the next identity for the divisor function 
$d(n) \equiv \sigma_0(n)$ expanded in terms of Euler's totient function, $\phi(n)$. 
\[
d(n) = \sum_{d|n} \sum_{k=1}^{d} t_{d,k}^{(-1)}\left( 
     \sum_{\substack{j \geq 0 \\ k+1-G_j > 0}} (-1)^{\ceiling{\frac{j}{2}}} 
     \phi(k+1-G_j) - [q^k] (q; q)_{\infty}\right). 
\] 
\end{example} 

\begin{remark} 
There are also numerous noteworthy applications of the expansions of the type I sums from 
this section in the context of exact (and asymptotic) expansions of 
named partition functions. For instance, Rademacher's exact series formula for the 
partition function $p(n)$ involves Dedekind sums implicitly expanded through sums of this 
type. Similarly, an asymptotic approximation for the 
named special function $q(n) = [q^n] (-q; q)_{\infty}$ \cite[A000009]{OEIS} which counts the number of 
partitions of $n$ into distinct parts involves an infinite series over modified Bessel 
functions and nested Kloosterman sums \cite[\S 26.10(vi)]{NISTHB}. 
We have not attempted to study the usefulness of our new finite sums in these contexts in 
deconstructing asymptotic properties in these more famous examples of partition formulas. 
A detailed treatment is nonetheless suggested as an exercise to readers which may unravel some 
undiscovered combinatorial twists to the expansions of such sums. 
\end{remark} 

\begin{example}[Menon's Identity and Related Arithmetical Sums] 
\label{example_Menons_Identity_Toth} 
We can use our new results proved in this section to expand new identities for 
known closed-forms of special arithmetic sums. For example, 
\emph{Menon's identity} \cite{TOTH} states that 
\[
\phi(n) d(n) = \sum_{\substack{1 \leq k \leq n \\ (k,n)=1}} \gcd(k-1, n), 
\]
where $\phi(n)$ is Euler's totient function and $d(n) = \sigma_0(n)$ is the 
divisor function. We can then expand the right-hand-side of Menon's identity as 
follows: 
\[
\phi(n) d(n) = \sum_{j=0}^n \sum_{k=1}^{j-1} \sum_{i=0}^j 
     p(n-j) (-1)^{\ceiling{i/2}} \chi_{1,k}(j-k-G_i) \Iverson{j-k-G_i \geq 1} 
     \gcd(k-1, n). 
\] 
As another application, we show a closely related identity considered by T\'oth in \cite{TOTH}. 
T\'oth's identity states that (\cf \cite{GCD-SUMS}) for an arithmetic function $f$ we have
\[
\sum_{\substack{1 \leq k \leq n \\ (k,n)=1}} f\left(\gcd(k-1, n)\right) = 
     \phi(n) \cdot \sum_{d|n} \frac{(\mu \ast f)(d)}{\phi(d)}. 
\] 
We can use our new formulas to write a gcd-related recurrence relation for $f$ 
in two steps. First, we observe that the right-hand-side divisor sum in the 
previous equation is expanded by 
\begin{align*} 
\sum_{d|n} \frac{(\mu \ast f)(d)}{\phi(d)} & = \frac{1}{\phi(n)} \cdot 
     \sum_{j=0}^n \sum_{k=1}^{j-1} \sum_{i=0}^j 
     p(n-j) (-1)^{\ceiling{i/2}} \chi_{1,k}(j-k-G_i) \Iverson{j-k-G_i \geq 1} 
     f(\gcd(k-1, n)) \\ 
     & \phantom{=\quad\ } + f(1) \Iverson{n = 1}. 
\end{align*} 
The notation $(f \ast g)(n) := \sum_{d|n} f(d) g(n/d)$ denotes the Dirichlet convolution of 
two arithmetic functions, $f$ and $g$. 

Next, by M\"obius inversion and noting that the Dirichlet inverse of $\mu(n)$ is 
$\mu \ast 1 = \varepsilon$, where $\varepsilon(n) = \delta_{n,1}$ is the multiplicative 
identity with respect to Dirichlet convolution, we can express $f(n)$ as follows: 
\begin{align*} 
f(n) & = \sum_{d|n} \sum_{r|d} \sum_{j=0}^r \sum_{k=1}^{j-1} \sum_{i=0}^j \Biggl[
     p(r-j) (-1)^{\ceiling{i/2}} \chi_{1,k}(j-k-G_i) \Iverson{j-k-G_i \geq 1} \times \\ 
     & \phantom{=\sum_{d|n} \sum_{r|d} \sum_{j=0}^r \sum_{k=1}^{j-1} \sum_{i=0}^j\quad\ } 
     \times 
     f(\gcd(k-1, r)) \frac{\phi(d)}{\phi(r)} \mu\left(\frac{d}{r}\right)\Biggr] + 
     f(1) \cdot \sum_{d|n} \phi(d) \mu(d). 
\end{align*} 
\end{example} 

\section{Factorization theorems for sums of the second type} 
\label{Section_Sum_TypeII}

\subsection{Formulas for the inverse matrices} 
\label{subSection_typeII_inv_matrices} 

It happens that in the case of the series expansions we defined in 
\eqref{eqn_intro_two_sum_fact_types_def_v2} of the introduction, the 
corresponding terms of the inverse matrices $u_{n,k}^{(-1)}(f, w)$ 
satisfy considerably simpler formulas that the ordinary matrix entries themselves. 
We first prove a partition-related explicit formula for these inverse matrices as 
Proposition \ref{prop_unk_inverse_matrix} and 
then discuss several applications of this result. 

\begin{proof}[Proof of Proposition \ref{prop_unk_inverse_matrix}]
Let $1 \leq r \leq n$ and for some suitably chosen arithmetic function $g$ define 
\[
\tag{i} 
u_{n,r}^{(-1)}(f, w) := \sum_{m=1}^n L_{f,g,m}(n) w^m. 
\]
By directly expanding the series on the right-hand-side of 
\eqref{eqn_intro_two_sum_fact_types_def_v2}, we obtain that 
\begin{align*} 
g(n) & = \sum_{j=0}^n \left( 
     \sum_{k=1}^j u_{j,k}(f, w) \cdot u_{k,r}^{(-1)}(f, w)\right) p(n-j) \\ 
     & = \sum_{j=0}^n p(n-j) \Iverson{j = r} 
     = p(n-r). 
\end{align*} 
Hence the choice of the function $g$ which satisfies (i) above is given by 
$g(n) := p(n-r)$. The claimed expansion of the inverse matrices then follows. 
\end{proof} 

\begin{prop}
\label{prop_exact_exp_of_Lfgmn} 
We in fact prove the following generalized identity for arbitrary arithmetic functions $f,g$: 
\begin{equation} 
\label{eqn_gen_Lfgmn_formula} 
L_{f,g,m}(n) = \sum_{k=1}^n \sum_{d|(m,n)} f(d) p\left(\frac{n}{d}-k\right) \times 
     \sum_{\substack{j \geq 0 \\ k > G_j}} (-1)^{\ceiling{\frac{j}{2}}} 
     g\left(k-G_j\right).
\end{equation} 
\end{prop}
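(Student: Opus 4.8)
The plan is to read the identity directly off the inverse factorization relation \eqref{eqn_intro_two_sum_fact_types_def_v4_invs}, into which I substitute the closed form for $u_{n,k}^{(-1)}(f,w)$ supplied by Proposition \ref{prop_unk_inverse_matrix} together with a pentagonal-number expansion of the series coefficient $[q^k]\bigl((q;q)_\infty \sum_{\ell\geq 1} g(\ell) q^\ell\bigr)$, and then to equate coefficients of $w^m$ on both sides. Concretely, for a fixed row index $n$ the relation \eqref{eqn_intro_two_sum_fact_types_def_v4_invs} reads
\[
\sum_{m=1}^n L_{f,g,m}(n)\, w^m = \sum_{k=1}^n u_{n,k}^{(-1)}(f,w)\cdot [q^k]\!\left((q;q)_\infty \sum_{\ell\geq 1} g(\ell) q^\ell\right),
\]
so both sides are polynomials in $w$ of degree at most $n$, and the proposition will follow by comparing the coefficient of $w^m$ for each $1 \leq m \leq n$.

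First I would expand the series coefficient on the right using Euler's pentagonal number theorem in the form $(q;q)_\infty = \sum_{j\geq 0} (-1)^{\ceiling{j/2}} q^{G_j}$, where $G_j$ is the interleaved pentagonal-number sequence of the introduction; the signs $(-1)^{\ceiling{j/2}}$ reproduce exactly the $+,-,-,+,+,\dots$ pattern of the classical theorem. Multiplying by $\sum_{\ell\geq 1} g(\ell) q^\ell$ and extracting $[q^k]$ then gives
\[
[q^k]\!\left((q;q)_\infty \sum_{\ell\geq 1} g(\ell) q^\ell\right) = \sum_{\substack{j\geq 0 \\ k > G_j}} (-1)^{\ceiling{j/2}} g(k-G_j),
\]
where the constraint $k > G_j$ simply records that $g$ is evaluated only at positive integer arguments $k - G_j \geq 1$. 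This is precisely the innermost sum appearing in \eqref{eqn_gen_Lfgmn_formula}.

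Next I would insert Proposition \ref{prop_unk_inverse_matrix}, which supplies $[w^m]\, u_{n,k}^{(-1)}(f,w) = \sum_{d|(m,n)} f(d)\, p(n/d - k)$, and compare coefficients of $w^m$. Since the factorization \eqref{eqn_intro_two_sum_fact_types_def_v4_invs} is an identity of polynomials in the free parameter $w\in\mathbb{C}$, equating the $w^m$-coefficient on the left (which is $L_{f,g,m}(n)$) with the $w^m$-coefficient on the right yields
\[
L_{f,g,m}(n) = \sum_{k=1}^n \left(\sum_{d|(m,n)} f(d)\, p\!\left(\tfrac{n}{d}-k\right)\right) \sum_{\substack{j\geq 0 \\ k > G_j}} (-1)^{\ceiling{j/2}} g(k-G_j),
\]
which is \eqref{eqn_gen_Lfgmn_formula} once the $d$-independent pentagonal factor is pulled through. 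The work is essentially bookkeeping, so the only genuine point to watch is the legitimacy of the coefficient extraction in $w$: because \eqref{eqn_intro_two_sum_fact_types_def_v4_invs} holds for every $w\in\mathbb{C}$ and both sides have degree at most $n$ in $w$, the coefficientwise comparison is valid. The main obstacle I anticipate is therefore not analytic but notational, namely keeping the nested index ranges ($k$ up to $n$, the divisor condition $d\mid(m,n)$, and the pentagonal cutoff $k > G_j$) aligned so that the triple sum on the right matches the stated form exactly.
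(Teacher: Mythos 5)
Your proposal is correct and follows essentially the same route as the paper: both arguments invert the factorization \eqref{eqn_intro_two_sum_fact_types_def_v2}, substitute the closed form of $u_{n,k}^{(-1)}(f,w)$ from Proposition \ref{prop_unk_inverse_matrix}, and compare coefficients of $w^m$, with your version merely making explicit the pentagonal-number expansion of $[q^k](q;q)_{\infty}\sum_{\ell\geq 1} g(\ell)q^{\ell}$ that the paper leaves implicit. As a minor bonus, you also silently correct the index collision in the paper's displayed inversion step (where $k$ appears both as the argument of $L_{f,g,m}$ and as the outer summation index).
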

\begin{proof} 
Since the coefficients on the left-hand-side of the next equation correspond to a 
right-hand-side matrix product as 
\[
[q^n] (q; q)_{\infty} \sum_{m \geq 1} g(m) q^m = 
     \sum_{k=1}^n u_{n,k}(f, w) \sum_{m=1}^k L_{f,g,m}(k) w^m, 
\] 
we can invert the matrix product on the right to obtain that 
\[
\sum_{m=1}^k L_{f,g,m}(k) w^m = \sum_{k=1}^{n} \left(\sum_{m=1}^n \sum_{d|(n,m)} 
     f(d) p\left(\frac{n}{d}-k\right) \cdot w^m\right) \times 
     [q^k] (q; q)_{\infty} \sum_{m \geq 1} g(m), 
\]
so that by comparing coefficients of $w^m$ for $1 \leq m \leq n$, we obtain 
\eqref{eqn_gen_Lfgmn_formula}. 
\end{proof} 

\begin{cor}[A New Formula for Ramanujan Sums]
\label{cor_RamSumNewFormula} 
For any natural numbers $x,m \geq 1$, we have that 
\[
c_x(m) = \sum_{k=1}^x \sum_{d|(m,x)} d \cdot p\left(\frac{x}{d}-k\right) \times 
     \sum_{\substack{j \geq 0 \\ k > G_j}} (-1)^{\ceiling{\frac{j}{2}}} 
     \mu\left(k-G_j\right). 
\] 
\end{cor}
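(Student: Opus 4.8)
The plan is to realize Ramanujan's sum $c_x(m)$ as a single instance of the type II (Anderson--Apostol) sum $L_{f,g,k}(x)$ and then invoke the general closed form already established in Proposition \ref{prop_exact_exp_of_Lfgmn}. Recalling the glossary definition $c_q(n) = \sum_{d|(q,n)} d\,\mu(q/d)$, I would first rewrite the Ramanujan sum appearing in the corollary as $c_x(m) = \sum_{d|(x,m)} d\,\mu(x/d)$, taking care that the subscript $x$ plays the role of the modulus while $m$ is the argument.

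The key step is the identification of the correct arithmetic functions. Comparing $c_x(m) = \sum_{d|(m,x)} d\,\mu(x/d)$ with the defining form $L_{f,g,k}(n) = \sum_{d|(k,n)} f(d)\,g(n/d)$, I would choose $f = \Id_1$ (so that $f(d) = d$), $g = \mu$, $n = x$, and the type II index parameter equal to $m$. This yields the exact matching $c_x(m) = L_{\Id_1,\mu,m}(x)$, since both sums range over the same divisors $d \mid (m,x) = (x,m)$ and the summand $d\,\mu(x/d)$ agrees term by term.

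With this identification in hand, the result follows by direct substitution into the formula of Proposition \ref{prop_exact_exp_of_Lfgmn}. Setting $f(d) = d$ replaces the generic weight $f(d)$ inside the divisor sum by $d$, and setting $g = \mu$ replaces the inner sum $\sum_{j \geq 0,\, k > G_j} (-1)^{\ceiling{j/2}} g(k-G_j)$ by the corresponding M\"obius sum $\sum_{j \geq 0,\, k > G_j} (-1)^{\ceiling{j/2}} \mu(k-G_j)$. The resulting expression is precisely the claimed right-hand side, which completes the proof.

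Since the argument is purely a specialization of an identity already proved, I expect no substantive obstacle. The only point requiring genuine care is the bookkeeping of the two gcd-index roles, namely modulus versus argument, so that the divisor condition $d \mid (m,x)$ and the M\"obius argument $x/d$ line up correctly with the standard definition of $c_x(m)$ and with the orientation of the subscript and argument slots in $L_{f,g,m}(n)$.
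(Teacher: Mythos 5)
Your proposal is correct and coincides with the paper's own proof, which likewise obtains the corollary as the specialization $f(n) := n$, $g(n) := \mu(n)$ of Proposition \ref{prop_exact_exp_of_Lfgmn}. Your additional care in checking that $c_x(m) = L_{\operatorname{Id}_1,\mu,m}(x)$ with the modulus and argument slots correctly aligned is sound and fills in the one detail the paper leaves implicit.
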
 
\begin{proof} 
The Ramanujan sums correspond to the special case of 
Proposition \ref{prop_exact_exp_of_Lfgmn} 
where $f(n) := n$ is the identity function and $g(n) := \mu(n)$ is the 
M\"obius function. 
\end{proof} 

\begin{remark} 
We define the following shorthand notation: 
\begin{equation*} 
\widehat{L}_{f,g}(n; w) := \sum_{m=1}^{n} L_{f,g,m}(n) w^m. 
\end{equation*}
In this notation we have that $u_{n,k}^{(-1)}(f, w) = \widehat{L}_{f,g}(n; w)$ when 
$g(n) := p(n-k)$. 
Moreover, if we denote by $T_n(x)$ the polynomial 
$T_n(x) := 1+x+x^2+\cdots+x^{n-1} = \frac{x^n-1}{x-1}$, 
then we have expansions of these sums as convolved ordinary divisor sums ``twisted'' by 
polynomial terms of the form 
\begin{align}
\label{eqn_lfgnw_sum_divsum_exp_v1} 
\widehat{L}_{f,g}(n; w) & = \sum_{d|n} w^d f(d) T_{n/d}(w^d) g\left(\frac{n}{d}\right) \\ 
\notag 
     & = (w^n-1) \times \sum_{d|n} \frac{w^d}{w^d-1} f(d) g\left(\frac{n}{d}\right). 
\end{align} 
The Dirichlet inverse of these divisor sums is also not difficult to express, though 
we will not give its formula here. 
These sums lead to a first formula for the more challenging expressions for the 
ordinary matrix entries $u_{n,k}(f, w)$ given by the next corollary. 
\end{remark} 

\begin{cor}[A Formula for the Ordinary Matrix Entries] 
\label{cor_unkfw_ord_matrix_formula_v1} 
To distinguish notation, let $\widehat{P}_{f,k}(n; w) := \widehat{L}_{f(n),p(n-k)}(n; w)$, which is an 
immediate shorthand for the matrix inverse terms $u_{n,k}^{(-1)}(f, w)$ that we will precisely 
enumerate below. 
For $n \geq 1$ and $1 \leq k < n$, we have the following formula: 
\begin{align*} 
 & u_{n,k}(f, w) = -\frac{(1-w)^2}{w^2 \cdot (1-w^n)(1-w^k) \cdot f(1)^2}\Biggl( 
     \widehat{P}_{f,k}(n; w) \\ 
     & \phantom{=\ } + \sum_{m=1}^{n-k-1} 
     \left(\frac{w-1}{w f(1)}\right)^{m} \left[ 
     \sum_{k \leq i_1 < \cdots < i_m < n} \frac{\widehat{P}_{f,k}(i_1; w) 
     \widehat{P}_{f,i_1}(i_2; w) \widehat{P}_{f,i_2}(i_3; w) \cdots 
     \widehat{P}_{f,i_{m-1}}(i_m; w) \widehat{P}_{f,i_m}(n; w)}{ 
     (1-w^{i_1}) (1-w^{i_2}) \cdots (1-w^{i_m})} 
     \right]\Biggr) 
\end{align*} 
When $k = n$, we have that 
$$u_{n,n}(f, w) = \frac{1-w}{w(1-w^n) \cdot f(1)}.$$ 
\end{cor}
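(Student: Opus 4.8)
The plan is to treat this as a pure linear-algebra inversion problem: the sequences $u_{n,k}(f,w)$ and $u_{n,k}^{(-1)}(f,w)$ are mutually inverse lower triangular matrices, and by the shorthand of the corollary the entries of the inverse are the already-computed quantities $u_{n,k}^{(-1)}(f,w)=\widehat{P}_{f,k}(n;w)$. So I would recover $u_{n,k}(f,w)$ by inverting the triangular matrix $\big(\widehat{P}_{f,k}(n;w)\big)$, after first pinning down its diagonal in closed form. Using the divisor-sum expansion $\widehat{L}_{f,g}(n;w)=(w^n-1)\sum_{d|n}\frac{w^d}{w^d-1}f(d)g(n/d)$ from the preceding remark with $g(m)=p(m-n)$, I observe that $p(n/d-n)$ vanishes for every $d\mid n$ except $d=1$ (where it is $p(0)=1$), since $n/d\le n$ with equality only at $d=1$. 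Hence $\widehat{P}_{f,n}(n;w)=(w^n-1)\frac{w}{w-1}f(1)$, and since the diagonal of an inverse triangular matrix consists of the reciprocals, this gives $u_{n,n}(f,w)=1/\widehat{P}_{f,n}(n;w)=\frac{1-w}{w(1-w^n)f(1)}$, which is exactly the stated $k=n$ case and also supplies the diagonal values needed for the off-diagonal formula.

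Next I would invoke the standard closed form for inverting a lower triangular matrix. Writing $U^{-1}=D(I+N)$ with $D$ the diagonal part, $D_{nn}=\widehat{P}_{f,n}(n;w)$, and $N$ strictly lower triangular, the Neumann expansion $U=(I+N)^{-1}D^{-1}=\sum_{m\ge 0}(-N)^m D^{-1}$ contributes only finitely many terms to any fixed entry $(n,k)$, because the interior chains have bounded length. Expanding the $(n,k)$ entry then produces a sum over increasing chains $k<i_1<\cdots<i_m<n$ of the products $\widehat{P}_{f,k}(i_1;w)\widehat{P}_{f,i_1}(i_2;w)\cdots\widehat{P}_{f,i_m}(n;w)$, each divided by the diagonal entries $\widehat{P}_{f,i_\ell}(i_\ell;w)$ attached to the intermediate indices and weighted by $(-1)^m$, while the two extremal reciprocal diagonals $1/\widehat{P}_{f,n}(n;w)$ and $1/\widehat{P}_{f,k}(k;w)$ factor out in front.

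The remaining work is sign and prefactor bookkeeping. Substituting $\widehat{P}_{f,i_\ell}(i_\ell;w)=(w^{i_\ell}-1)\frac{w}{w-1}f(1)$ and using $\frac{1}{w^{i_\ell}-1}=\frac{-1}{1-w^{i_\ell}}$ turns the product of intermediate reciprocal diagonals into $(-1)^m\left(\frac{w-1}{wf(1)}\right)^m\prod_\ell\frac{1}{1-w^{i_\ell}}$; the extra $(-1)^m$ cancels the $(-1)^m$ from the Neumann alternation, leaving precisely the factor $\left(\frac{w-1}{wf(1)}\right)^m$ of the statement. Pulling out the two extremal reciprocal diagonals collapses to the prefactor $-\frac{(1-w)^2}{w^2(1-w^n)(1-w^k)f(1)^2}$, the $m=0$ chain yields the bare $\widehat{P}_{f,k}(n;w)$ term, and counting interior indices shows $m$ runs from $1$ to $n-k-1$. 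The main obstacle is exactly this double sign cancellation (one from the Neumann series, one from rewriting $w^{i}-1$) together with the index accounting; everything else is routine. As an independent check, the identity can be reverified by induction on $n-k$ directly from the defining relation $\sum_{j=k}^n u_{n,j}(f,w)\,\widehat{P}_{f,k}(j;w)=\delta_{n,k}$.
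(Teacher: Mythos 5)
Your proposal is correct and follows essentially the same route as the paper: both reduce the corollary to the generic closed-form inversion of a lower triangular matrix applied to the known inverse entries $u_{n,k}^{(-1)}(f,w)=\widehat{P}_{f,k}(n;w)$, with the diagonal $\widehat{P}_{f,n}(n;w)=(w^n-1)\tfrac{w}{w-1}f(1)$ supplying the prefactors. The only difference is that you derive the chain-sum identity via the Neumann series $\sum_{m\ge 0}(-N)^m$ and carry out the sign and diagonal bookkeeping explicitly, whereas the paper simply cites the identity \eqref{eqn_lt_invmatrix_formula_v1} as provable by induction and leaves those computations implicit.
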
 
\begin{proof}
This follows inductively from the inversion relation between the coefficients of 
a matrix and its inverse. For any invertible lower triangular 
$n \times n$ matrix $(a_{i,j})_{1 \leq i,j \leq n}$, we can express a 
non-recursive formula for the inverse matrix entries as follows: 
\begin{align} 
\label{eqn_lt_invmatrix_formula_v1} 
a_{n,k}^{(-1)} & = \frac{1}{a_{n,n}}\left( 
     -\frac{a_{n,k}}{a_{k,k}} + \sum_{m=1}^{n-k-1} (-1)^{m+1} \left[
     \sum_{k \leq i_1 < \cdots < i_m < n} \frac{a_{i_1,k} a_{i_2,i_1} a_{i_3,i_2} \cdots 
     a_{i_m,i_{m-1}} a_{n,i_m}}{a_{k,k} a_{i_1,i_1} a_{i_2,i_2} \cdots 
     a_{i_m,i_m}}\right] 
     \right) \Iverson{k < n} + \frac{\Iverson{k=n}}{a_{n,n}}. 
\end{align} 
The proof of our result is then just an application of the formula in 
\eqref{eqn_lt_invmatrix_formula_v1} when $a_{n,k} := u_{n,k}^{-1}(f, w)$. 
While the identity in \eqref{eqn_lt_invmatrix_formula_v1} is not immediately obvious 
from the known inversion formulas between inverse matrices in the form of 
\[
a_{n,k}^{(-1)} = \frac{\Iverson{n=k}}{a_{n,n}} - \frac{1}{a_{n,n}} 
     \sum_{j=1}^{n-k-1} a_{n,j} a_{j,k}^{(-1)}, 
\]
the result is easily obtained by induction on $n$ so we do not prove it here. 
\end{proof} 

\subsection{Formulas for simplified variants of the ordinary matrices} 
\label{subSection_formulas_for_ordinary_matrices} 

In Corollary \ref{cor_unkfw_ord_matrix_formula_v1} 
we proved an exact, however somewhat implicit and unsatisfying, 
expansion of the ordinary matrix entries $u_{n,k}(f, w)$ by sums of 
weighted products of the inverse matrices $u_{n,k}^{(-1)}(f, w)$ expressed in 
closed form through Proposition \ref{prop_unk_inverse_matrix}. 
We will now develop the machinery needed to more precisely 
express the ordinary forms of these matrices for 
general cases of the indeterminate indexing parameter $w \in \mathbb{C}$. 

\begin{table}[ht!] 

\begin{equation*} 
\boxed{
\begin{array}{llllll}
 \frac{1}{\widehat{f}(1)} & 0 & 0 & 0 & 0 & 0 \\
 -\frac{\widehat{f}(2)}{\widehat{f}(1)^2}-\frac{1}{\widehat{f}(1)} & \frac{1}{\widehat{f}(1)} & 0 & 0 & 0 & 0 \\
 \frac{\widehat{f}(2)}{\widehat{f}(1)^2}-\frac{\widehat{f}(3)}{\widehat{f}(1)^2}-\frac{1}{\widehat{f}(1)} & -\frac{1}{\widehat{f}(1)} & \frac{1}{\widehat{f}(1)} & 0 &
   0 & 0 \\
 \frac{\widehat{f}(2)^2}{\widehat{f}(1)^3}+\frac{\widehat{f}(2)}{\widehat{f}(1)^2}+\frac{\widehat{f}(3)}{\widehat{f}(1)^2}-\frac{\widehat{f}(4)}{\widehat{f}(1)^2} &
   -\frac{\widehat{f}(2)}{\widehat{f}(1)^2}-\frac{1}{\widehat{f}(1)} & -\frac{1}{\widehat{f}(1)} & \frac{1}{\widehat{f}(1)} & 0 & 0 \\
 -\frac{\widehat{f}(2)^2}{\widehat{f}(1)^3}+\frac{\widehat{f}(3)}{\widehat{f}(1)^2}+\frac{\widehat{f}(4)}{\widehat{f}(1)^2}-\frac{\widehat{f}(5)}{\widehat{f}(1)^2} &
   \frac{\widehat{f}(2)}{\widehat{f}(1)^2} & -\frac{1}{\widehat{f}(1)} & -\frac{1}{\widehat{f}(1)} & \frac{1}{\widehat{f}(1)} & 0 \\
-\frac{\widehat{f}(2)^2}{\widehat{f}(1)^3}+\frac{2 \widehat{f}(3)
   \widehat{f}(2)}{\widehat{f}(1)^3}+\frac{\widehat{f}(4)}{\widehat{f}(1)^2}+\frac{\widehat{f}(5)}{\widehat{f}(1)^2}-\frac{\widehat{f}(6)}{\widehat{f}(1)^2}+\frac{1}{\widehat{f}(1)} &
   \frac{\widehat{f}(2)}{\widehat{f}(1)^2}-\frac{\widehat{f}(3)}{\widehat{f}(1)^2} & 
   -\frac{\widehat{f}(2)}{\widehat{f}(1)^2} & -\frac{1}{\widehat{f}(1)} &
   -\frac{1}{\widehat{f}(1)} & \frac{1}{\widehat{f}(1)}
\end{array}
}
\end{equation*} 

\caption{The simplified matrix entries $\widehat{u}_{n,k}(f, w)$ for $1 \leq n,k \leq 6$ where 
         $\hat{f}(n) = \frac{w^n}{w^n-1} \cdot f(n)$ for arithmetic functions $f$ such that $f(1) \neq 0$.}
\label{table_simplifies_unkfw} 

\end{table}  

\begin{remark}[Simplifications of the Matrix Terms] 
\label{remark_simplified_unkfw} 
Using the formula for the coefficients of $u_{n,k}(f, w)$ in 
\eqref{eqn_intro_two_sum_fact_types_def_v2} expanded by 
\eqref{eqn_lfgnw_sum_divsum_exp_v1}, we can simplify the 
form of the matrix entries we seek closed-form expressions for in the next 
calculations. In particular, we make the following definitions for 
$1 \leq k \leq n$: 
\begin{align*} 
\widehat{f}(n) & := \frac{w^n}{w^n-1} f(n) \\ 
\widehat{u}_{n,k}(f, w) & := (w^k - 1) u_{n,k}(f, w). 
\end{align*} 
Then an equivalent formulation of finding the exact formulas for $u_{n,k}(f, w)$ is 
to find exact expressions expanding the triangular sequence of 
$\widehat{u}_{n,k}(f, w)$ satisfying 
\[
\sum_{\substack{j \geq 0 \\ n-G_j > 0}} (-1)^{\ceiling{\frac{j}{2}}} g(n-G_j) = 
     \sum_{k=1}^{n} \widehat{u}_{n,k}(f, w) \sum_{d|k} 
     \widehat{f}(d) g\left(\frac{n}{d}\right). 
\] 
We will obtain precisely such formulas in the next few results. 
Table \ref{table_simplifies_unkfw} provides the first few rows of our 
simplified matrix entries. 
\end{remark} 

\begin{table}[ht!] 

\centering
\begin{tabular}{|c|l|c|l|c|l|} \hline 
$n$ & $D_f(n)$ & $n$ & $D_f(n)$ & $n$ & $D_f(n)$ \\ \hline 
2 & $-\frac{\widehat{f}(2)}{\widehat{f}(1)^2}$ & 7 & 
    $-\frac{\widehat{f}(7)}{\widehat{f}(1)^2}$ & 12 & 
    $\frac{2 \hat{f}(3) \hat{f}(4)+2
   \hat{f}(2) \hat{f}(6)-\hat{f}(1) \hat{f}(12)}{\hat{f}(1)^3}-\frac{3
   \hat{f}(2)^2
   \hat{f}(3)}{\hat{f}(1)^4}$ \\ 
3 & $-\frac{\widehat{f}(3)}{\widehat{f}(1)^2}$ & 8 & 
    $\frac{2 \widehat{f}(2) \widehat{f}(4)-\widehat{f}(1)
   \widehat{f}(8)}{\widehat{f}(1)^3}-\frac{\widehat{f}(2)^3}{\widehat{f}(1)^4}$ & 13 & 
    $-\frac{\hat{f}(13)}{\hat{f}(1)^2}$ \\ 
4 & $\frac{\widehat{f}(2)^2-\widehat{f}(1)
   \widehat{f}(4)}{\widehat{f}(1)^3}$ & 9 & 
    $\frac{\widehat{f}(3)^2-\widehat{f}(1) \widehat{f}(9)}{\widehat{f}(1)^3}$ & 14 & 
    $\frac{2\hat{f}(2) \hat{f}(7)-\hat{f}(1) \hat{f}(14)}{\hat{f}(1)^3}$ \\ 
5 & $-\frac{\widehat{f}(5)}{\widehat{f}(1)^2}$ & 10 & 
    $\frac{2\widehat{f}(2) \widehat{f}(5)-\widehat{f}(1)
   \widehat{f}(10)}{\widehat{f}(1)^3}$ & 15 & 
    $\frac{2\hat{f}(3) \hat{f}(5)-\hat{f}(1)
   \hat{f}(15)}{\hat{f}(1)^3}$ \\ 
6 & $\frac{2 \widehat{f}(2) \widehat{f}(3)-\widehat{f}(1)
   \widehat{f}(6)}{\widehat{f}(1)^3}$ & 11 & 
    $-\frac{\hat{f}(11)}{\hat{f}(1)^2}$ & 16 & 
    $\frac{\hat{f}(2
   )^4}{\hat{f}(1)^5}-\frac{3 \hat{f}(4)
   \hat{f}(2)^2}{\hat{f}(1)^4}+\frac{\hat{f}(4)^2+2 \hat{f}(2)
   \hat{f}(8)}{\hat{f}(1)^3}-\frac{\hat{f}(16)}{\hat{f}(1)^2}$ \\ \hline 
\end{tabular} 

\bigskip 

\caption{The multiple convolution function $D_f(n)$ for $2 \leq n \leq 16$ where $\widehat{f}(n) := \frac{w^n}{w^n-1} \cdot f(n)$ for an 
         arbitrary arithmetic function $f$ such that $f(1) \neq 0$.} 
\label{table_cases_of_Dn} 

\end{table} 

\begin{definition}[Special Multiple Convolutions] 
\label{def_djn_Dn_func_defs} 
For $n,j \geq 1$, we define the following nested $j$-convolutions of the function $\widehat{f}(n)$ \cite{MERCA-SCHMIDT-RAMJ}: 
\begin{align*} 
\ds_j(f; n) = \begin{cases} 
     (-1)^{\delta_{n,1}} \widehat{f}(n), & \text{ if $j = 1$; } \\ 
     \sum\limits_{\substack{d|n \\ d>1}} \widehat{f}(d) \ds_{j-1}\left(f; \frac{n}{d}\right), & 
     \text{ if $j \geq 2$. } 
     \end{cases} 
\end{align*} 
Then we define our primary multiple convolution function of interest as 
\[
D_f(n) := \sum_{j=1}^n \frac{\ds_{2j}(f; n)}{\widehat{f}(1)^{2j+1}}. 
\]
For example, the first few cases of $D_f(n)$ for $2 \leq n \leq 16$ are 
computed in Table \ref{table_cases_of_Dn}. 
The examples in the table should clarify precisely what multiple convolutions 
we are defining by the function $D_f(n)$. Namely, a signed sum of all possible 
ordinary $k$ Dirichlet convolutions of $\widehat{f}$ with itself evaluated at $n$. 
\end{definition}

\begin{lemma} 
\label{lemma_Dastfhat} 
We claim that for all $n \geq 1$
\[
(D_f \ast \widehat{f})(n) \equiv \sum_{d|n} f(d) D_f(n/d) = 
     -\frac{\widehat{f}(n)}{\widehat{f}(1)} + \varepsilon(n). 
\]
where $\varepsilon(n) \equiv \delta_{n,1}$ is the multiplicative identity function 
with respect to Dirichlet convolution. 
\end{lemma}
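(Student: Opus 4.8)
The plan is to recognize each nested convolution $\ds_j(f;\cdot)$ as a combination of Dirichlet powers of a single shifted function, and then to collapse the even-indexed defining sum for $D_f$ into the full Neumann series for the Dirichlet inverse of $\widehat{f}$. Throughout I set $h := \widehat{f} - \widehat{f}(1)\varepsilon$, so that $h(1) = 0$ while $h(n) = \widehat{f}(n)$ for $n > 1$; thus $h$ is supported on the integers $n \geq 2$. I write $h^{\ast k}$ for the $k$-fold Dirichlet self-convolution of $h$, with $h^{\ast 0} := \varepsilon$ (this $h^{\ast k}$ is exactly the nested convolution $C_k$ of the glossary). Since every argument occurring inside $h^{\ast k}$ is at least $2$, one has $h^{\ast k}(n) = 0$ whenever $2^k > n$; hence every sum over $j$ or $k$ below terminates at each fixed $n$, and no convergence question arises.

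First I would recast Definition \ref{def_djn_Dn_func_defs} in convolution form. The base case unwinds to $\ds_1(f;n) = \widehat{f}(n)$ for $n > 1$ and $\ds_1(f;1) = -\widehat{f}(1)$, i.e.\ $\ds_1(f;\cdot) = h - \widehat{f}(1)\varepsilon$. For $j \geq 2$ the restriction $d > 1$ in the inner sum means the convolution index never equals $1$, so $h(1)=0$ lets me write the recursion as $\ds_j(f;\cdot) = h \ast \ds_{j-1}(f;\cdot)$. A one-line induction on $j$ then gives the closed form
\[
\ds_j(f;\cdot) = h^{\ast j} - \widehat{f}(1)\, h^{\ast(j-1)}, \qquad j \geq 1,
\]
with the inductive step obtained simply by convolving the previous identity with $h$.

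Next I would substitute this into $D_f = \sum_{j \geq 1} \ds_{2j}(f;\cdot)/\widehat{f}(1)^{2j+1}$. Writing $A_k := h^{\ast k}/\widehat{f}(1)^{k+1}$, the closed form says $\ds_j(f;\cdot)/\widehat{f}(1)^{j+1} = A_j - A_{j-1}$, so summing over even indices telescopes into an alternating series:
\[
D_f = \sum_{j \geq 1}\bigl(A_{2j} - A_{2j-1}\bigr) = \sum_{k \geq 1} (-1)^k A_k = \sum_{k \geq 1} (-1)^k \frac{h^{\ast k}}{\widehat{f}(1)^{k+1}}.
\]
On the other hand, from $\widehat{f} = \widehat{f}(1)\bigl(\varepsilon + h/\widehat{f}(1)\bigr)$ and the geometric-series form of the Dirichlet inverse (legitimate termwise because $h(1) = 0$), one has $\widehat{f}^{-1} = \sum_{k \geq 0}(-1)^k h^{\ast k}/\widehat{f}(1)^{k+1}$. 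Comparing the two displays isolates the single missing $k=0$ term and yields the compact identity $D_f = \widehat{f}^{-1} - \varepsilon/\widehat{f}(1)$. Convolving with $\widehat{f}$ and using $\widehat{f}^{-1} \ast \widehat{f} = \varepsilon$ then gives $D_f \ast \widehat{f} = \varepsilon - \widehat{f}/\widehat{f}(1)$, which is the assertion.

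I expect the one subtle step to be the telescoping: I must verify that summing only the even-indexed differences $A_{2j} - A_{2j-1}$ reproduces the entire alternating series $\sum_{k\geq1}(-1)^k A_k$, so that the pairing of each even index with the preceding odd index leaves no leftover terms. This is precisely where the design of $D_f$, summing $\ds_{2j}$ with weight $\widehat{f}(1)^{-(2j+1)}$, does its work. Everything else reduces to formal Dirichlet-convolution algebra, and the support condition $h(1)=0$ makes all the sums finite at each $n$, so no analytic input is needed.
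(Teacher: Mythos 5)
Your proof is correct and follows essentially the same route as the paper: both reduce the claim to the identity $D_f = \widehat{f}^{-1} - \varepsilon/\widehat{f}(1)$ by expressing the nested sums $\ds_j(f;\cdot)$ through the self-convolutions of $h = \widehat{f} - \widehat{f}(1)\varepsilon$ (the paper's $C_k$) and matching against the expansion of the Dirichlet inverse. Your version is in fact tighter than the paper's — the explicit closed form $\ds_j = h^{\ast j} - \widehat{f}(1)h^{\ast(j-1)}$, the telescoping of the even-indexed terms into the full alternating Neumann series, and the support observation $h^{\ast k}(n)=0$ for $2^k>n$ make rigorous the steps the paper only sketches.
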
 
\begin{proof} 
We note that the statement of the lemma is equivalent to showing that 
\begin{equation} 
\label{eqn_fhat_Dinv_formula_exp_v1} 
\left(D_f + \frac{\varepsilon}{\widehat{f}(1)}\right)(n) = \widehat{f}^{-1}(n). 
\end{equation} 
A general recursive formula for the inverse of $\widehat{f}(n)$ is given by 
\cite{APOSTOL-ANUMT} 
\[
\widehat{f}^{-1}(n) = \left(-\frac{1}{\widehat{f}(1)} \sum_{\substack{d|n \\ d > 1}} 
     \widehat{f}(d) \widehat{f}^{-1}(n/d)\right) \Iverson{n > 1} + \frac{1}{\widehat{f}(1)} 
     \Iverson{n = 1}. 
\]
This definition is almost how we defined $\ds_j(f; n)$ above. Let's see how to modify this 
recurrence relation to obtain the formula for $D_f(n)$. 
We can recursively substitute in the formula for $\widehat{f}^{-1}(n)$ until we hit the 
point where successive substitutions only leave the base case of 
$\widehat{f}^{-1}(1) = 1 / \widehat{f}(1)$. This occurs after $\Omega(n)$ substitutions 
where $\Omega(n)$ denotes the number of prime factors of $n$ counting multiplicity. 
We can write the nested formula for $\ds_j(f; n)$ as 
\[
\ds_j(f; n) = \widehat{f}_{\pm} \ast 
\undersetbrace{j-1\text{ factors}}{\left(\widehat{f}-\widehat{f}(1) \varepsilon\right) 
     \ast \cdots 
     \ast \left(\widehat{f}-\widehat{f}(1) \varepsilon\right)}(n), 
\]
where we define 
$\widehat{f}_{\pm}(n) := \widehat{f}(n) \Iverson{n > 1} - \widehat{f}(1) \Iverson{n = 1}$. 
Next, define the nested $k$-convolutions $C_k(n)$ recursively by 
\label{page_eqn_Ckn_kCvls} 
\[
C_k(n) = \begin{cases} 
     \widehat{f}(n) - \widehat{f}(1)\varepsilon(n), & \text{ if $k = 1$; } \\ 
     \sum\limits_{d|n} \left(\widehat{f}(d) - \widehat{f}(1) \varepsilon(d)\right) 
     C_{k-1}(n/d), & \text{ if $k \geq 2$. } 
     \end{cases} 
\] 
Then we can express the inverse of $\widehat{f}(n)$ using this definition as follows: 
\[
\widehat{f}^{-1}(n) = \sum_{d|n} \widehat{f}(d)\left[ 
     \sum_{j=1}^{\Omega(n)} \frac{C_{2k}(n/d)}{\widehat{f}(1)^{\Omega(n)+1}} - 
     \frac{\varepsilon(n/d)}{\widehat{f}(1)^2}\right]. 
\]
Then based on the initial conditions for $k = 1$ (or $j = 1$) in the definitions of 
$C_k(n)$ (or $\ds_j(f; n)$), we see that the function in 
\eqref{eqn_fhat_Dinv_formula_exp_v1} is in fact the inverse of $\widehat{f}(n)$. 
\end{proof} 

\begin{prop} 
\label{prop_exact_pnrec_for_hatunk} 
For all $n \geq 1$ and $1 \leq k \leq n$, we have that 
\[
\sum_{i=0}^{n-1} p(i) \widehat{u}_{n-i,k}(f, w) = D_f\left(\frac{n}{k}\right) 
     \Iverson{n \equiv 0 \bmod{k}} + \frac{1}{\widehat{f}(1)} \Iverson{n = k}. 
\] 
\end{prop}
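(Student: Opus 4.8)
The plan is to recognize the left-hand side $\sum_{i=0}^{n-1} p(i)\,\widehat{u}_{n-i,k}(f,w)$ as the $(n,k)$-entry of the matrix \emph{inverse} to Dirichlet convolution by $\widehat{f}$, and then to read off that inverse from Lemma \ref{lemma_Dastfhat}. The starting point is the defining relation for the simplified matrices recorded in Remark \ref{remark_simplified_unkfw}: combining the expansion of $g(x)$ in \eqref{eqn_intro_two_sum_fact_types_def_v2} with the divisor-sum form \eqref{eqn_lfgnw_sum_divsum_exp_v1}, namely $\widehat{L}_{f,g}(n;w)=(w^n-1)(\widehat{f}\ast g)(n)$, and the substitution $\widehat{u}_{n,k}(f,w)=(w^k-1)u_{n,k}(f,w)$, yields for \emph{every} arithmetic function $g$ (with the convention $g(0)=0$)
\[
[q^n](q;q)_{\infty}\sum_{m\ge1}g(m)\,q^m \;=\; \sum_{k=1}^n \widehat{u}_{n,k}(f,w)\,(\widehat{f}\ast g)(k),
\]
where the left-hand side is the pentagonal expansion $\sum_{j\ge0}(-1)^{\ceiling{j/2}}g(n-G_j)$ over the terms with $n-G_j>0$, and $(\widehat{f}\ast g)(k)=\sum_{d\mid k}\widehat{f}(d)\,g(k/d)$.

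First I would convolve this identity with the partition function, i.e.\ multiply the underlying generating functions by $1/(q;q)_{\infty}=\sum_{m\ge0}p(m)q^m$. Because $(q;q)_{\infty}\cdot(q;q)_{\infty}^{-1}=1$, the left-hand side collapses to $g(n)$; interchanging the two finite sums on the right (legitimate since $\widehat{u}_{j,k}=0$ for $j<k$, which also lets me extend the $i$-range harmlessly to $0\le i\le n-1$) produces
\[
g(n)\;=\;\sum_{k=1}^n S_{n,k}\,(\widehat{f}\ast g)(k)\qquad\text{for all }g,
\]
where $S_{n,k}:=\sum_{i=0}^{n-1} p(i)\,\widehat{u}_{n-i,k}(f,w)$ is precisely the quantity whose closed form is claimed.

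Next I would read this as a matrix identity. Writing $M$ for the lower-triangular matrix of Dirichlet convolution by $\widehat{f}$, with entries $M_{k,\ell}=\widehat{f}(k/\ell)\Iverson{\ell\mid k}$ so that $(\widehat{f}\ast g)(k)=\sum_{\ell}M_{k,\ell}g(\ell)$, the displayed relation says $\sum_{k}S_{n,k}M_{k,\ell}=\delta_{n,\ell}$ for all $\ell\le n$, i.e.\ $SM=I$ and hence $S=M^{-1}$. Since $M$ is the convolution matrix of $\widehat{f}$, its inverse is the convolution matrix of the Dirichlet inverse $\widehat{f}^{-1}$, giving $S_{n,k}=\widehat{f}^{-1}(n/k)\Iverson{k\mid n}$. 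Finally I would invoke Lemma \ref{lemma_Dastfhat} in the equivalent form $\widehat{f}^{-1}=D_f+\varepsilon/\widehat{f}(1)$, which splits this entry into $D_f(n/k)\Iverson{k\mid n}$ together with the diagonal correction $\tfrac{1}{\widehat{f}(1)}\Iverson{n=k}$ coming from the $\varepsilon/\widehat{f}(1)$ summand, establishing the statement.

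I expect the main difficulty to be bookkeeping rather than conceptual. One must confirm that the starting identity genuinely holds for \emph{arbitrary} $g$, so that the full matrix identity $SM=I$ (not merely its action on a single vector) is available; one must track the boundary conventions, in particular that the $j=0$ pentagonal term contributes $g(n)$, that $g(0)=0$, and the consistency of the diagonal case (e.g.\ $\widehat{u}_{1,1}=1/\widehat{f}(1)$ and $D_f(1)=0$); and one must be careful that the isolated term $\tfrac{1}{\widehat{f}(1)}\Iverson{n=k}$ is exactly the $\varepsilon/\widehat{f}(1)$ piece of $\widehat{f}^{-1}$ and not part of the convolution $D_f\ast\widehat{f}$. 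If one prefers to avoid matrix-inverse language, the same conclusion follows by testing $g(n)=\sum_k S_{n,k}(\widehat{f}\ast g)(k)$ against the functions $g=\widehat{f}^{-1}\ast e_{k_0}$ with $e_{k_0}(\ell)=\delta_{\ell,k_0}$ for each $k_0\le n$, which isolates $S_{n,k_0}$; this is valid because $\widehat{f}(1)\neq0$ renders $\widehat{f}$ invertible under Dirichlet convolution.
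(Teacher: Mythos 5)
Your proposal is correct and follows essentially the same route as the paper: both convolve the defining factorization identity with $1/(q;q)_{\infty}$ so that $\sum_{i=0}^{n-1}p(i)\,\widehat{u}_{n-i,k}(f,w)$ appears as the kernel satisfying $g(n)=\sum_k S_{n,k}(\widehat{f}\ast g)(k)$, and both then identify $S_{n,k}$ via Lemma \ref{lemma_Dastfhat} as the Dirichlet inverse $\widehat{f}^{-1}=D_f+\varepsilon/\widehat{f}(1)$ evaluated at $n/k$. Your explicit justification that the identity holds for arbitrary $g$ (e.g.\ by testing against $g=\widehat{f}^{-1}\ast e_{k_0}$) is a welcome tightening of the paper's terser ``equate (i) and (ii)'' step, but it is a presentational refinement rather than a different argument.
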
 
\begin{proof} 
We notice that Lemma \ref{lemma_Dastfhat} implies that 
\[
\varepsilon(n) = \left(\left(D_f + \frac{\varepsilon}{\widehat{f}(1)}\right) \ast 
     \widehat{f}\right)(n), 
\] 
where $\varepsilon(n)$ is the multiplicative identity for Dirichlet convolutions. 
The last equation implies that 
\[
\tag{i} 
g(n) = \left(\left(D_f + \frac{\varepsilon}{\widehat{f}(1)}\right) \ast 
     \widehat{f} \ast g\right)(n).  
\] 
Additionally, we know by the expansion of 
\eqref{eqn_intro_two_sum_fact_types_def_v2} and that 
$\widehat{u}_{n,n}(f, w) = 1 / \widehat{f}(1)$ that we also have the expansion 
\[
\tag{ii} 
g(n) = \sum_{k \geq 1} \left[\sum_{j=0}^{n-1} p(j) \widehat{u}_{n-j,k}\right] 
     \sum_{d|k} \widehat{f}(d) g(k/d). 
\] 
So we can equate (i) and (ii) to see that 
\[
\sum_{j=0}^{n-1} p(j) \widehat{u}_{n-j,k} = D_f\left(\frac{n}{k}\right) \Iverson{k | n} + 
     \frac{\Iverson{n=k}}{\widehat{f}(1)}. 
\] 
This establishes our claim. 
\end{proof} 

\begin{cor}[An Exact Formula for the Ordinary Matrix Entries] 
\label{cor_exact_formula_for_hatunkfw} 
For all $n \geq 1$ and $1 \leq k \leq n$, we have that 
\[
\widehat{u}_{n,k}(f, w) = \sum_{\substack{j \geq 0 \\ n-G_j > 0}} 
     (-1)^{\ceiling{\frac{j}{2}}} \left(D_f\left(\frac{n-G_j}{k}\right) 
     \Iverson{n-G_j \equiv 0 \bmod{k}} + \frac{1}{\widehat{f}(1)} 
     \Iverson{n-G_j = k}\right). 
\]
\end{cor}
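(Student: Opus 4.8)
The plan is to recognize Proposition \ref{prop_exact_pnrec_for_hatunk} as the statement that the partition-function series, convolved against the column sequence $\{\widehat{u}_{n,k}(f,w)\}_{n \geq 1}$, reproduces the right-hand-side function
\[
R_k(n) := D_f\!\left(\frac{n}{k}\right) \Iverson{n \equiv 0 \bmod{k}} + \frac{1}{\widehat{f}(1)} \Iverson{n = k},
\]
and then to invert this convolution using Euler's pentagonal number theorem. Concretely, I would fix $k \geq 1$ and set $U_k(q) := \sum_{n \geq 1} \widehat{u}_{n,k}(f,w) q^n$ together with $R_k(q) := \sum_{n \geq 1} R_k(n) q^n$. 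Since $\sum_{i \geq 0} p(i) q^i = 1/(q;q)_{\infty}$, the Cauchy product shows that $[q^n]\bigl((q;q)_{\infty}^{-1} U_k(q)\bigr)$ is exactly $\sum_{i=0}^{n-1} p(i) \widehat{u}_{n-i,k}(f,w)$, so Proposition \ref{prop_exact_pnrec_for_hatunk} is precisely the assertion that $(q;q)_{\infty}^{-1} U_k(q) = R_k(q)$ as formal power series.

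First I would multiply both sides by $(q;q)_{\infty}$ to obtain $U_k(q) = (q;q)_{\infty} R_k(q)$, which is legitimate in $\mathbb{C}[[q]]$ because $(q;q)_{\infty}$ is a unit. Next I would invoke the pentagonal number theorem in the interleaved form already used in the article, namely
\[
(q;q)_{\infty} = \sum_{j \geq 0} (-1)^{\ceiling{j/2}} q^{G_j},
\]
with $G_j$ the generalized pentagonal numbers of the introduction. Extracting the coefficient of $q^n$ from $(q;q)_{\infty} R_k(q)$ via one further Cauchy product then yields
\[
\widehat{u}_{n,k}(f,w) = \sum_{\substack{j \geq 0 \\ G_j < n}} (-1)^{\ceiling{j/2}} R_k(n-G_j),
\]
and substituting the definition of $R_k(n-G_j)$ produces exactly the claimed formula, with the constraint $G_j < n$ matching the requirement $n - G_j > 0$ that $R_k$ be evaluated at a positive index.

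The remaining points are bookkeeping rather than conceptual. I would check that the truncation $n - G_j > 0$ is correct, noting that $R_k(m)$ contributes nothing for $m \leq 0$ and that the $j = 0$ term ($G_0 = 0$) supplies $R_k(n)$ itself, and I would confirm that the two Iverson-bracketed pieces of $R_k(n-G_j)$ reproduce both summands inside the parentheses of the statement, including the boundary term $\frac{1}{\widehat{f}(1)}\Iverson{n-G_j = k}$. The main thing to get right is the interchange of the two convolutions, first peeling off $p$ against $\widehat{u}$ as in the proposition and then re-convolving with $(q;q)_{\infty}$; but since every series involved is lower triangular in its coefficients and the manipulations take place in $\mathbb{C}[[q]]$, no convergence issues arise and the inversion is purely formal.
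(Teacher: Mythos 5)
Your proposal is correct and follows exactly the paper's own route: the paper likewise deduces the corollary from Proposition \ref{prop_exact_pnrec_for_hatunk} by noting that $p(n)$ is generated by $(q;q)_{\infty}^{-1}$ and applying the pentagonal number theorem expansion $(q;q)_{\infty} = \sum_{j \geq 0} (-1)^{\ceiling{j/2}} q^{G_j}$ to invert the convolution. You have simply written out the coefficient-extraction bookkeeping that the paper leaves implicit.
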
 
\begin{proof} 
This is an immediate consequence of Proposition \ref{prop_exact_pnrec_for_hatunk} 
by noting that the generating function for $p(n)$ is $(q; q)_{\infty}^{-1}$ and that 
\[
(q; q)_{\infty} = \sum_{j \geq 0} (-1)^{\ceiling{\frac{j}{2}}} q^{G_j}. 
     \qedhere
\]
\end{proof} 

\subsection{The general matrices expressed through discrete Fourier transforms} 
\label{subSection_DFT_exps} 

The proof of the result given in 
Theorem \ref{theorem_main_lfgmn_exp_v1} below builds on several key ideas for 
discrete Fourier transforms of the greatest common divisor function 
$(k, n) \equiv \gcd(k, n)$ developed in \cite{kamp-gcd-transform}. 
We adopt the common convention that the function $e(x)$ denotes the 
exponential function $e(x) := e^{2\pi\imath x}$. 
Throughout the remainder of this section we take $k \geq 1$ to be fixed and 
consider divisor sums of the following form which are periodic with respect to $k$: 
\[
L_{f,g,k}(n) := \sum_{d|(n,k)} f(d) g\left(\frac{n}{d}\right). 
\]
In \cite{kamp-gcd-transform} these sums are called $k$-convolutions of $f$ and $g$. 
We will first need to discuss some terminology related to discrete Fourier 
transforms. 

\label{page_DTFT_coefficient_defs}
A \emph{discrete Fourier transform} (DFT) maps a finite sequence of complex numbers 
$\{f[n]\}_{n=0}^{N-1}$ onto their associated Fourier coefficients 
$\{F[n]\}_{n=0}^{N-1}$ defined according to the following reversion 
formulas relating these sequences: 
\begin{align*} 
F[k] & = \phantom{\frac{1}{N}} \sum_{n=0}^{N-1} f[n] e\left(-\frac{kn}{N}\right) \\ 
f[k] & = \frac{1}{N} \sum_{n=0}^{N-1} F[k] e\left(\frac{kn}{N}\right). 
\end{align*} 
The discrete Fourier transform of functions of the greatest common 
divisor, which we will employ 
repeatedly to prove Theorem \ref{theorem_main_lfgmn_exp_v1} below,  
is summarized by the formula in the next lemma \cite{kamp-gcd-transform,SCHRAMM}.

\begin{lemma}[Typical Relations Between Periodic Divisor Sums and Fourier Series]
If we take any two arithmetic functions $f$ and $g$, 
we can express periodic divisor sums modulo any $k \geq 1$ of the form 
\begin{subequations} 
\label{eqn_snk_akm_intro_Fourier_coeff_exps}
\begin{align} 
s_k(f, g; n) & := \sum_{d|(n,k)} f(d) g(k/d) 
     = \sum_{m=1}^k a_k(f, g; m) \cdot e^{2\pi\imath \cdot mn/k}, 
\end{align} 
where the discrete Fourier coefficients in the right-hand-side equation are given by  
\begin{equation}
a_k(f, g; m) = \sum_{d|(m,k)} g(d) f(k/d) \cdot \frac{d}{k}. 
\end{equation} 
\end{subequations}
\end{lemma}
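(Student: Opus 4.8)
The plan is to verify the claimed finite Fourier expansion directly, by substituting the proposed coefficient formula $a_k(f, g; m) = \sum_{d|(m,k)} g(d) f(k/d) \cdot \frac{d}{k}$ into the right-hand sum $\sum_{m=1}^{k} a_k(f,g;m) e^{2\pi\imath \cdot mn/k}$ and showing that it collapses exactly to $s_k(f,g;n)$. The entire argument rests on the orthogonality of roots of unity, namely that for any integer $N \geq 1$ one has $\sum_{\ell=1}^{N} e^{2\pi\imath \cdot \ell n/N} = N \cdot \Iverson{N | n}$; everything else is reindexing.

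First I would interchange the order of the two summations. Plugging in the coefficient formula produces a double sum over $1 \leq m \leq k$ and over $d|(m,k)$. Since the condition $d|(m,k)$ is equivalent to $d|k$ together with $d|m$, I would move $d$ (ranging over the fixed, finite set of divisors of $k$) to the outer sum and let $m$ range over the multiples of $d$ in $[1,k]$ in the inner sum. This isolates the exponential sum $\sum_{1 \leq m \leq k,\, d|m} e^{2\pi\imath \cdot mn/k}$, with the data-dependent factor $g(d) f(k/d) \tfrac{d}{k}$ pulled out front.

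Next I would evaluate that inner exponential sum. Writing $m = d\ell$ with $1 \leq \ell \leq k/d$ turns it into the geometric sum $\sum_{\ell=1}^{k/d} e^{2\pi\imath \cdot \ell n/(k/d)}$, which by the orthogonality relation equals $(k/d)\cdot \Iverson{(k/d)|n}$. Substituting back, the factor $\tfrac{d}{k}$ cancels against $k/d$, leaving $\sum_{d|k} g(d) f(k/d) \Iverson{(k/d)|n}$. Finally the change of variable $e := k/d$, which is a bijection of the divisors of $k$, rewrites this as $\sum_{e|(n,k)} f(e) g(k/e)$, i.e.\ exactly $s_k(f,g;n)$, completing the verification.

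The main obstacle here is not conceptual but careful bookkeeping: one must confirm that summing $m$ from $1$ to $k$ (rather than $0$ to $k-1$) really sweeps out a complete residue system so that the orthogonality relation applies, and one must track the two successive reindexings ($m = d\ell$ and $e = k/d$) without dropping or double-counting terms. I would also remark that since $s_k(f,g;n)$ depends on $n$ only through $(n,k)$, it is periodic in $n$ modulo $k$, which guarantees \emph{a priori} that a unique finite Fourier expansion of this form exists; the direct substitution thus does double duty, both proving the identity and confirming that the $a_k(f,g;m)$ are the genuine DFT coefficients consistent with the inversion formulas stated just above the lemma.
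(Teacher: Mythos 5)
Your verification is correct: interchanging the order of summation (using that $d\mid(m,k)$ is equivalent to $d\mid k$ and $d\mid m$), evaluating the inner geometric sum $\sum_{\ell=1}^{k/d} e^{2\pi\imath \cdot \ell n/(k/d)} = (k/d)\Iverson{(k/d)\mid n}$, and then substituting $e = k/d$ does collapse the right-hand side exactly to $s_k(f,g;n)$, and your remark about periodicity correctly accounts for why these are the genuine DFT coefficients. The paper does not carry out this computation at all --- its ``proof'' is a citation to Apostol \S 8.3 and the NIST Handbook --- so your argument is in substance the standard proof from those references, supplied in full where the paper merely defers to them.
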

\begin{proof}
For a proof of these relations consult the references 
\cite[\S 8.3]{APOSTOL-ANUMT} \cite[\cf \S 27.10]{NISTHB}. 
These relations are also related to the gcd-transformations proved in 
\cite{kamp-gcd-transform,SCHRAMM}.
\end{proof} 

\begin{notation}
The function $c_m(a)$ defined by 
\[
c_m(a) := \sum_{\substack{k=1 \\ (k, m) = 1}}^{m} e\left(\frac{ka}{m}\right), 
\] 
is Ramanujan's sum. Ramanujan's sum is expanded as in the divisor sums in 
Corollary \ref{cor_RamSumNewFormula} of the last subsection. In the next lemma, the 
convolution operator $\ast$ on two arithmetic functions $f,g$ corresponds to the 
Dirichlet convolution, $f \ast g$. 
\end{notation} 

\begin{lemma}[DFT of Functions of the Greatest Common Divisor] 
Let $h$ be any arithmetic function. 
For natural numbers $m \geq 1$, the discrete Fourier transform (DFT) of $h$ 
is defined by the following function: 
\[
\widehat{h}[a](m) := \sum_{k=1}^m h\left(\gcd(k, m)\right) 
     e\left(\frac{ka}{m}\right). 
\] 
This variant of the DFT of $h(\gcd(n, k))$ (with $k$ a free parameter) satisfies 
$\widehat{h}[a] = h \ast c_{-}(a)$
where the function $\widehat{h}[a]$ is summed explicitly for $n \geq 1$ as the Dirichlet convolution 
\[
\widehat{h}[a](n) = (h \ast c_{-}(a))(n) = \sum_{d|n} h(n/d) c_d(a). 
\]
\end{lemma}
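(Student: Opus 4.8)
The plan is to evaluate the defining sum by partitioning the summation index $k$ according to the value of $\gcd(k,m)$, which is the standard route for collapsing a gcd-weighted exponential sum into a Dirichlet convolution against Ramanujan's sum. First I would write
\[
\widehat{h}[a](m) = \sum_{k=1}^m h(\gcd(k,m)) \e{\frac{ka}{m}} = \sum_{g \mid m} h(g) \sum_{\substack{k=1 \\ \gcd(k,m) = g}}^{m} \e{\frac{ka}{m}},
\]
since every $k$ in the range lies in exactly one divisor class $g = \gcd(k,m)$, and $h(\gcd(k,m)) = h(g)$ is constant on each such class.

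The key reindexing step is to parametrize each class explicitly. For a fixed divisor $g \mid m$, the condition $\gcd(k,m) = g$ together with $1 \leq k \leq m$ is equivalent to writing $k = g j$ where $1 \leq j \leq m/g$ and $\gcd(j, m/g) = 1$. Under this substitution the exponential simplifies as $\e{\frac{g j a}{m}} = \e{\frac{j a}{m/g}}$, so the inner sum becomes
\[
\sum_{\substack{j=1 \\ \gcd(j, m/g) = 1}}^{m/g} \e{\frac{j a}{m/g}} = c_{m/g}(a),
\]
which is precisely Ramanujan's sum at modulus $m/g$ by the definition recorded in the preceding notation block.

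Substituting this back gives $\widehat{h}[a](m) = \sum_{g \mid m} h(g)\, c_{m/g}(a)$, and relabeling the divisor via $d := m/g$ (so that $g = m/d$ likewise ranges over the divisors of $m$) yields
\[
\widehat{h}[a](m) = \sum_{d \mid m} h(m/d)\, c_d(a) = (h \ast c_{-}(a))(m),
\]
as claimed. The argument contains no genuine obstacle; the only point requiring care is the bookkeeping in the reindexing step, namely verifying the bijection between $\{k : 1 \leq k \leq m,\ \gcd(k,m) = g\}$ and $\{j : 1 \leq j \leq m/g,\ \gcd(j, m/g) = 1\}$ and confirming that the exponential argument reduces to $\e{j a/(m/g)}$, so that the inner sum is recognizable as $c_{m/g}(a)$ rather than a sum to an unintended modulus. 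Once that identification is made, the passage to the convolution form is immediate.
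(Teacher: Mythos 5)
Your proof is correct, and it is the standard derivation: partition the range $1 \leq k \leq m$ by the value $g = \gcd(k,m)$, use the bijection $k = gj$ with $\gcd(j, m/g) = 1$ (justified by $\gcd(gj, g\cdot m/g) = g\gcd(j, m/g)$), reduce the exponential to modulus $m/g$, and recognize the inner sum as $c_{m/g}(a)$ by the definition of Ramanujan's sum given in the preceding notation block. Note that the paper itself supplies no proof of this lemma at all --- it is stated as a known result quoted from van der Kamp and Schramm --- so your argument is not an alternative to the paper's proof but rather a self-contained derivation of a fact the paper only cites; as such it is a genuine (if modest) addition, and every step checks out.
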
 

\begin{definition}[Notation and Special Exponential Sums] 
\label{def_NotationAndSpecialExpSums} 
In what follows, we denote the $\ell^{th}$ Fourier coefficient with respect to $k$ 
of the function $L_{f,g,k}(n)$ by $a_{k,\ell}$ which is well defined since 
$L_{f,g,k}(n) = L_{f,g,k}(n+k)$ is periodic with period $k$. We then have an expansion of this 
function in the form of 
\[
L_{f,g,k}(n) = \sum_{\ell=0}^{k-1} a_{k,\ell} \cdot \e{\frac{\ell n}{k}}, 
\] 
where we can compute these coefficients directly from $L_{f,g,k}(n)$ according to the 
formula 
\[
a_{k,\ell} = \sum_{n=0}^{k-1} L_{f,g,k}(n) \e{-\frac{\ell n}{k}}. 
\] 
We also notice that these Fourier coefficients are given explicitly in terms of 
$f$ and $g$ by the formulas cited in \eqref{eqn_snk_akm_intro_Fourier_coeff_exps} above. 
\end{definition} 

\begin{theorem}
\label{theorem_main_lfgmn_exp_v1} 
For all arithmetic functions $f,g$ and natural numbers $k \geq 1$, we have that 
\begin{equation}
\label{eqn_ThmMainLfgkn_exp}
\sum_{d|k} \sum_{r=0}^{k-1} d \cdot L_{f,g,r}(k) \e{-\frac{rd}{k}} \mu(k/d) = 
  \sum_{d|k} \phi(d) f(d) (k/d)^2 g(k/d),
\end{equation}  
where $\phi(n)$ is Euler's totient function. 
\end{theorem}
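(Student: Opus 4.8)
The plan is to evaluate the inner double sum on the left-hand side of \eqref{eqn_ThmMainLfgkn_exp} by recognizing it as a finite Fourier coefficient extraction, and then to collapse the resulting divisor sum by means of the classical identity $\Id \ast \mu = \phi$. First I would invoke the Fourier series expansion of the periodic divisor sum recorded in \eqref{eqn_snk_akm_intro_Fourier_coeff_exps}, namely
\[
L_{f,g,r}(k) = s_k(f,g;r) = \sum_{m=1}^{k} a_k(f,g;m)\, \e{\tfrac{mr}{k}}, \qquad a_k(f,g;m) = \sum_{e|(m,k)} g(e)\, f(k/e)\, \tfrac{e}{k}.
\]
Substituting this into the inner sum over $r$ and interchanging the order of summation gives the expression $\sum_{m=1}^{k} a_k(f,g;m) \sum_{r=0}^{k-1} \e{\tfrac{r(m-d)}{k}}$. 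By orthogonality of the exponentials, the inner $r$-sum equals $k$ precisely when $m \equiv d \pmod{k}$ and vanishes otherwise; since $d \mid k$ forces $1 \le d \le k$, the only surviving term is $m = d$. Hence the sum over $r$ collapses to $k \cdot a_k(f,g;d)$, and because $d \mid k$ gives $(d,k) = d$, this simplifies to $\sum_{e|d} e\, g(e)\, f(k/e)$.

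Next I would substitute this evaluation back into the full left-hand side to obtain
\[
\sum_{d|k} d\, \mu(k/d) \sum_{e|d} e\, g(e)\, f(k/e),
\]
and interchange the $d$- and $e$-summations, so that $e$ now ranges over divisors of $k$ and $d$ over multiples of $e$ dividing $k$. Writing $d = et$ with $t \mid (k/e)$, the coefficient of $e\, g(e)\, f(k/e)$ becomes $e \sum_{t|(k/e)} t\, \mu\!\left(\tfrac{k/e}{t}\right)$. The key simplification is the convolution identity $\sum_{t|N} t\, \mu(N/t) = (\Id \ast \mu)(N) = \phi(N)$, which follows from $N = \sum_{d|N} \phi(d)$ by Möbius inversion. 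Applying it with $N = k/e$ yields the factor $e \cdot \phi(k/e)$, so the left-hand side reduces to $\sum_{e|k} e^2\, \phi(k/e)\, g(e)\, f(k/e)$.

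Finally, reindexing the right-hand side of \eqref{eqn_ThmMainLfgkn_exp} through the substitution $d \mapsto k/e$ (with $e \mid k$) turns it into $\sum_{e|k} \phi(k/e)\, f(k/e)\, e^2\, g(e)$, which is identical to the expression just derived, completing the proof. I expect the main obstacle to be purely organizational rather than analytic: the delicate points are tracking the range $1 \le d \le k$ so that exactly one Fourier mode survives the orthogonality step, and keeping the nested divisor-sum interchanges and the final reindexing aligned between the two sides. The only nontrivial structural input is the standard arithmetic identity $\Id \ast \mu = \phi$, so no deeper machinery should be needed.
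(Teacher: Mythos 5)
Your proposal is correct and follows essentially the same route as the paper's proof: both identify the inner $r$-sum as $k$ times the Fourier coefficient $a_k(f,g;d)$ of the periodic divisor sum from \eqref{eqn_snk_akm_intro_Fourier_coeff_exps}, then interchange the resulting nested divisor sums and collapse via $\operatorname{Id}\ast\mu=\phi$. Your write-up is if anything slightly more careful, since you carry out the orthogonality step and the final reindexing $d\mapsto k/e$ explicitly rather than citing them.
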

\begin{proof} 
We notice that the left-hand-side of \eqref{eqn_ThmMainLfgkn_exp} is a divisor sum 
of the form 
\begin{align*} 
\sum_{d|k} \sum_{r=0}^{k-1} d \cdot L_{f,g,r}(k) \e{-\frac{rd}{k}} \mu(k/d) & = 
     \sum_{d|k} d \cdot a_{k,d} \cdot \mu(k/d), 
\end{align*} 
where the Fourier coefficients in this expansion are given by 
\eqref{eqn_snk_akm_intro_Fourier_coeff_exps} 
\cite[\S 8.3]{APOSTOL-ANUMT} \cite[\S 27.10]{NISTHB}. 
In particular, we have that 
\[
a_{k,d} = k \cdot \sum_{r|(k,d)} g(r) f(k / r) \frac{r}{k}. 
\] 
The left-hand-side of our expansion then becomes 
(\cf \eqref{eqn_sum_over_divsum_interchange_exp_ident} below) 
\begin{align*} 
\sum_{d|k} d \cdot a_{k,d} \mu(k/d) & = 
     \sum_{d|k} \sum_{r|d} r g(r) f\left(\frac{k}{r}\right) \cdot 
     d \mu\left(\frac{k}{d}\right) \\ 
     & = 
     \sum_{d=1}^k \Iverson{d|k} d \cdot \mu\left(\frac{k}{d}\right) \times 
     \sum_{r|d} r g(r) f\left(\frac{k}{r}\right) \\ 
     & = 
     \sum_{r|k} r g(r) f\left(\frac{k}{r}\right) \times 
     \sum_{d=1}^{k/r} dr \cdot \mu\left(\frac{k}{dr}\right) \Iverson{d|k} \\ 
     & = 
     \sum_{r|k} r^2 g(r) \cdot f\left(\frac{k}{r}\right) 
     \phi\left(\frac{k}{r}\right). 
\end{align*} 
We notice that while the exponential sums in the original statement of the 
claim are desirable in expanding applications, this direct expansion is 
difficult to manipulate algebraically. Therefore, we have effectively swapped out the 
exponential sum for the known divisor sum formula for the Fourier coefficients 
implicit in the statement of \eqref{eqn_ThmMainLfgkn_exp} 
in order to prove our key result. 
\end{proof} 

\begin{cor}[An Exact Formula for $g(n)$] 
\label{cor_exact_formula_forn_gn} 
For any $n \geq 1$ and arithmetic functions $f,g$ we have the formula 
$$g(n) = \sum_{d|n} \sum_{j|d} \sum_{r=0}^{d-1} \frac{j \cdot L_{f,g,r}(d)}{d^2} 
  \e{-\frac{rj}{d}} \mu(d/j) y_f(n/d), $$
where $y_f(n) = (\phi f \Id_{-2})^{-1}(n)$ is the Dirichlet inverse of 
$f(n) \phi(n) / n^2$ and $\operatorname{Id}_k(n) := n^k$ for $n \geq 1$. 
\end{cor}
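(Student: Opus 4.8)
The plan is to read Theorem~\ref{theorem_main_lfgmn_exp_v1} as an identity in which $g$ is buried inside a Dirichlet convolution, and then to extract it by Dirichlet inversion. First I would abbreviate the entire left-hand side of \eqref{eqn_ThmMainLfgkn_exp} as a single arithmetic function of $k$, writing
\[
A(k) := \sum_{d|k} \sum_{r=0}^{k-1} d \cdot L_{f,g,r}(k) \e{-\frac{rd}{k}} \mu(k/d),
\]
so that the theorem reads $A(k) = \sum_{d|k} \phi(d) f(d) (k/d)^2 g(k/d)$ for every $k \geq 1$.

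The key manoeuvre is to recognize the right-hand side, after a normalization, as an ordinary Dirichlet convolution. Dividing by $k^2$ and regrouping the summand as $\frac{\phi(d)f(d)}{d^2}\cdot g(k/d)$ shows that
\[
\frac{A(k)}{k^2} = \sum_{d|k} \frac{\phi(d)f(d)}{d^2}\, g(k/d) = (w \ast g)(k), \qquad w := \phi f \Id_{-2}.
\]
Here $w(d) = \phi(d)f(d)/d^2$ is precisely the function whose Dirichlet inverse is $y_f$ by definition.

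Next I would apply Dirichlet inversion. Since $w(1) = f(1) \neq 0$ under the standing hypothesis on $f$, the inverse $y_f = w^{-1}$ exists and satisfies $y_f \ast w = \varepsilon$. Convolving the previous display with $y_f$ therefore isolates $g$:
\[
g(n) = \sum_{d|n} \frac{A(d)}{d^2}\, y_f(n/d).
\]
Substituting the defining triple sum for $A(d)$, with its inner summation index renamed to $j$ so as not to collide with the outer divisor $d$, reproduces the stated formula verbatim.

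The whole argument is essentially bookkeeping; the only genuine step is spotting the convolution structure hidden in the right-hand side of the theorem. The point requiring care is the placement of the $k^2$ factor: it must be absorbed into the inner transform $A(d)$, producing the denominators $d^2$ in the final triple sum, rather than into the weight $w$, and one should confirm that $y_f$ exists solely on the basis of $f(1)\neq 0$, which guarantees $w(1) = f(1) \neq 0$.
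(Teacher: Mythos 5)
Your proposal is correct and follows exactly the same route as the paper's own (much terser) proof: divide the identity of Theorem~\ref{theorem_main_lfgmn_exp_v1} by $k^2$ to expose the Dirichlet convolution $(\phi f \Id_{-2}) \ast g$, then convolve with $y_f$ to isolate $g$. Your writeup simply makes explicit the bookkeeping (the placement of the $k^2$ and the invertibility condition $f(1)\neq 0$) that the paper leaves implicit.
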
 
\begin{proof} 
We first divide both sides of the result in 
Theorem \ref{theorem_main_lfgmn_exp_v1} 
by $k^2$. 
Then we apply a Dirichlet convolution of the left-hand-side of the formula in 
Theorem \ref{theorem_main_lfgmn_exp_v1} with 
$y_f(n)$ defined as above to obtain the exact expansion for $g(n)$. 
\end{proof} 

\begin{cor}[The Mertens Function] 
\label{cor_Mertens_function_v2} 
For all $x \geq 1$, the Mertens function defined in the introduction 
is expanded by Ramanujan's sum as 
\begin{align} 
\label{eqn_Mx_DFT_exp_v1} 
M(x) & = \sum_{d=1}^x \sum_{n=1}^{\floor{\frac{x}{d}}} \sum_{r=0}^{d-1} \left( 
     \sum_{j|d} \frac{j}{d} \cdot \e{-\frac{rj}{d}} \mu\left(\frac{d}{j}\right) 
     \right) \frac{c_d(r)}{d} y(n), 
\end{align} 
where $y(n) = (\phi \Id_{-1})^{-1}(n)$ is the Dirichlet inverse of 
$\phi(n) / n$. 
\end{cor}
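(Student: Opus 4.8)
The plan is to specialize the exact expansion for $g(n)$ furnished by Corollary \ref{cor_exact_formula_forn_gn} to the Möbius function and then pass to summatory functions. First I would take $g(n) := \mu(n)$ and $f(n) := n$, the identity function. In this case the weight appearing in the corollary collapses: since $\phi(n) f(n) / n^2 = \phi(n)\cdot n / n^2 = \phi(n)/n$, the Dirichlet inverse $y_f(n) = (\phi f \Id_{-2})^{-1}(n)$ reduces to $y(n) = (\phi \Id_{-1})^{-1}(n)$, which is exactly the function named in the statement. Second, I would identify the type II sum with Ramanujan's sum: with $f(n)=n$ and $g=\mu$ the defining divisor-sum expansion gives
\[
L_{f,g,r}(d) = \sum_{a|(r,d)} a \cdot \mu(d/a) = c_d(r).
\]

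Substituting these two observations into Corollary \ref{cor_exact_formula_forn_gn} produces an identity of convolution type, $\mu(n) = (A \ast y)(n)$, where $A(d)$ gathers precisely the factors in the corollary depending only on $d$ (everything except the trailing $y_f(n/d)$):
\[
A(d) := \sum_{j|d} \sum_{r=0}^{d-1} \frac{j \cdot c_d(r)}{d^2} \e{-\frac{rj}{d}} \mu(d/j).
\]
Here I use that the inner double sum $\sum_{j|d}\sum_{r}$ is a function of $d$ alone, so that $\sum_{d|n} A(d)\, y(n/d)$ is genuinely the Dirichlet convolution $(A\ast y)(n)$.

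The final step is to form the summatory function and reindex. Writing $M(x) = \sum_{n \leq x} \mu(n) = \sum_{n \leq x} \sum_{d|n} A(d)\, y(n/d)$ and applying the standard interchange for a summatory function of a Dirichlet convolution, i.e. the substitution $n = dm$ with $d$ ranging over $1 \leq d \leq x$ and $m$ over $1 \leq m \leq \floor{x/d}$, yields
\[
M(x) = \sum_{d=1}^x A(d) \sum_{m=1}^{\floor{x/d}} y(m).
\]
Expanding $A(d)$, factoring $\frac{j \cdot c_d(r)}{d^2} = \frac{j}{d}\cdot\frac{c_d(r)}{d}$, moving the finite sum over $r$ to the outside, and relabeling $m \mapsto n$ reproduces the parenthesization in \eqref{eqn_Mx_DFT_exp_v1} verbatim.

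Because every step is an exact identity for a fixed $x$, no convergence or estimation issue arises, and the divisor-sum interchange is valid since all sums are finite. The only delicate points are the two specializations — confirming $y_f = y$ and recognizing $L_{f,g,r}(d) = c_d(r)$ — and the purely mechanical bookkeeping of matching the grouping of the $\tfrac{j}{d}$ and $\tfrac{c_d(r)}{d}$ factors to the stated form. I expect that last rearrangement to be the most error-prone part, even though it is entirely routine.
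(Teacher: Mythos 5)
Your proposal is correct and follows essentially the same route as the paper: specialize Corollary \ref{cor_exact_formula_forn_gn} to $f(n)=n$, $g(n)=\mu(n)$ so that $L_{f,g,r}(d)=c_d(r)$ and $y_f=y$, then sum over $n\leq x$ and apply the standard interchange \eqref{eqn_sum_over_divsum_interchange_exp_ident} for the summatory function of a Dirichlet convolution. The only cosmetic difference is that the paper phrases the specialization as an appeal to Theorem \ref{theorem_main_lfgmn_exp_v1} before invoking the corollary, which changes nothing of substance.
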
 
\begin{proof} 
We begin by citing Theorem \ref{theorem_main_lfgmn_exp_v1} 
in the special case corresponding to 
$L_{f,g,k}(n)$ a Ramanujan sum for $f(n) = n$ and $g(n) = \mu(n)$. 
Then we sum over the left-hand-side $g(n)$ in 
Corollary \ref{cor_exact_formula_forn_gn} to obtain the 
initial summation identity for $M(x)$ given by 
\[
\tag{i} 
M(x) = \sum_{n \leq x} \sum_{d|n} \sum_{j|d} 
     \sum_{r=0}^{d-1} \frac{j}{d^2} c_d(r) \e{-\frac{rj}{d}} 
     \mu\left(\frac{d}{j}\right) y\left(\frac{n}{d}\right). 
\]
We can then apply the identity that for any arithmetic functions $h,u,v$ we can 
interchange nested divisor sums as\footnote{ 
     We also have a related identity which allows us to interchange the order of 
     summation in the Anderson-Apostol sums of the following form 
     for any natural numbers $x \geq 1$ and arithmetic functions 
     $f,g,h: \mathbb{N} \rightarrow \mathbb{C}$: 
     $$\sum_{d=1}^x f(d) \sum_{r|(d,x)} g(r) h\left(\frac{d}{r}\right) = 
       \sum_{r|x} g(r) \sum_{d=1}^{x/r} h(d) 
       f\left(\gcd(x,r) d\right).$$ 
} 
\begin{equation} 
\label{eqn_sum_over_divsum_interchange_exp_ident} 
\sum_{k=1}^n \sum_{d|k} h(d) u(k/d) v(k) = \sum_{d=1}^n h(d) \left( 
     \sum_{k=1}^{\floor{\frac{n}{d}}} u(k) v(dk)\right). 
\end{equation} 
Application of this identity to (i) leads to the first form for $M(x)$ stated in 
\eqref{eqn_Mx_DFT_exp_v1}. 
\end{proof} 

\begin{cor}[Euler's Totient Function]
For any $n \geq 1$ we have 
\[
\phi(n) = n \cdot \sum_{d|n} \sum_{j|d} \sum_{r=0}^{d-1} \frac{j}{d^2} c_d(r) 
     \e{-\frac{rj}{d}} \mu\left(\frac{d}{j}\right). 
\]
Additionally, we have the following expansion of the average order sums for $\phi(n)$ 
given by 
\[
\sum_{2 \leq n \leq x} \phi(n) = \sum_{d=1}^x \sum_{r=0}^{d-1} 
     \frac{c_d(r)}{2d} \floor{\frac{x}{d}}\left(\floor{\frac{x}{d}}-1\right) \times 
     \sum_{j|d} j \e{-\frac{rj}{d}} \mu\left(\frac{d}{j}\right). 
\] 
\end{cor}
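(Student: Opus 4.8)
The plan is to derive both displays from Theorem~\ref{theorem_main_lfgmn_exp_v1} specialized to the Ramanujan-sum case $f := \Id$, $g := \mu$, for which $L_{\Id,\mu,r}(d) = c_d(r)$. First I would divide that identity (written with $k$ replaced by $d$ and the inner divisor index renamed $j$) by $d^2$. Exactly as in the proof of Corollary~\ref{cor_exact_formula_forn_gn}, the right-hand side then collapses to a clean Dirichlet convolution, giving
\[
\widetilde{h}(d) := \frac{1}{d^2}\sum_{j|d}\sum_{r=0}^{d-1} j\,c_d(r)\,\e{-\tfrac{rj}{d}}\,\mu(d/j) = \sum_{j|d}\frac{\phi(j)}{j}\,\mu(d/j) = \bigl((\phi\,\Id_{-1})\ast\mu\bigr)(d).
\]
This $\widetilde{h}(d)$ is precisely the bracketed triple sum appearing in the statement of the corollary.

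For the first display I would convolve $\widetilde{h}$ with the constant function $\mathbf{1}$, that is, sum over the divisors of $n$. Since $\mu\ast\mathbf{1}=\varepsilon$, associativity of Dirichlet convolution yields $(\widetilde{h}\ast\mathbf{1})(n) = (\phi\,\Id_{-1})(n) = \phi(n)/n$, and multiplying through by $n$ reproduces $\phi(n) = n\sum_{d|n}\widetilde{h}(d)$, which is the claimed pointwise formula.

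For the summatory display I would set $B_d := d\,\widetilde{h}(d)$ (the coefficient enclosed in the outer parentheses of the statement) and sum $\phi(n) = n\sum_{d|n}\widetilde{h}(d)$ over $n$. Writing $n = dm$ and applying the nested-divisor interchange in \eqref{eqn_sum_over_divsum_interchange_exp_ident} gives $\sum_{1\le n\le x}\phi(n) = \sum_{d} B_d \sum_{m\le x/d} m = \sum_d B_d\,\tfrac12\floor{x/d}\bigl(\floor{x/d}+1\bigr)$. Subtracting $\phi(1)=1$ to restrict to $n\ge 2$, and invoking the elementary identity $\tfrac12 M(M+1) - M = \tfrac12 M(M-1)$, reduces the claim to showing that the correction term $\sum_d B_d\floor{x/d} = \sum_{n\le x}\sum_{d|n} d\,\widetilde{h}(d)$ equals exactly $1$ for every integer $x\ge 1$.

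Establishing that last correction is where I expect the only real subtlety. The key is the identity $\sum_{d|n} d\,\widetilde{h}(d) = \bigl((\Id\cdot\widetilde{h})\ast\mathbf{1}\bigr)(n) = \varepsilon(n)$: because $\Id$ is completely multiplicative it distributes over the convolution defining $\widetilde{h}$, so $\Id\cdot\widetilde{h} = \phi\ast(\Id\cdot\mu)$, and then $\phi\ast(\Id\cdot\mu)\ast\mathbf{1} = (\phi\ast\mathbf{1})\ast(\Id\cdot\mu) = \Id\ast(\Id\cdot\mu) = \Id\cdot(\mathbf{1}\ast\mu) = \Id\cdot\varepsilon = \varepsilon$, using $\phi = \Id\ast\mu$ and $\mathbf{1}\ast\mu = \varepsilon$. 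Hence the correction collapses to $\sum_{n\le x}\varepsilon(n) = 1$, and the second formula follows. As a sanity check I would verify the $n\ge 2$ bookkeeping against a small value such as $x=2$ to confirm the signs and the cancellation of the $\phi(1)$ term.
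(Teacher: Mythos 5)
Your proof follows the paper's route exactly: specialize Theorem~\ref{theorem_main_lfgmn_exp_v1} to $f=\Id$, $g=\mu$ (so $L_{f,g,r}(d)=c_d(r)$), divide by $d^2$ to get $\widetilde{h}=(\phi\,\Id_{-1})\ast\mu$, convolve with $\mathbf{1}$ using $\mu\ast\mathbf{1}=\varepsilon$ and multiply by $n$ for the pointwise formula, then apply the interchange identity \eqref{eqn_sum_over_divsum_interchange_exp_ident} for the summatory form. The one place you go beyond the paper is in reconciling the factor $\tfrac12\floor{x/d}\left(\floor{x/d}+1\right)$ that the interchange actually produces with the stated $\tfrac12\floor{x/d}\left(\floor{x/d}-1\right)$, via the correction $\sum_{d\le x} d\,\widetilde{h}(d)\floor{x/d}=\sum_{n\le x}\bigl(\phi\ast(\Id\cdot\mu)\ast\mathbf{1}\bigr)(n)=\sum_{n\le x}\varepsilon(n)=1$ together with subtracting $\phi(1)=1$; the paper's one-line justification silently skips this step, so your extra argument is both correct and necessary to obtain the formula as stated.
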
 
\begin{proof} 
We consider the formula in Theorem \ref{theorem_main_lfgmn_exp_v1} with 
$f(n) = n$ and $g(n) = \mu(n)$. Since the Dirichlet inverse of the M\"obius 
function is $\mu \ast 1 = \varepsilon$, we obtain our result by convolution and 
multiplication by the factor of $n$. The average order identity follows from the 
first expansion by applying \eqref{eqn_sum_over_divsum_interchange_exp_ident}. 
\end{proof} 

\subsection{An approach via polynomials and orthogonality relations} 
\label{subSection_orthog_conditions_integral_formulas} 

In Corollary \ref{cor_exact_formula_for_hatunkfw} of 
Section \ref{subSection_formulas_for_ordinary_matrices} 
we proved an exact formula for the modified ordinary matrix entries 
$\widehat{u}_{n,k}(f, w)$ defined by the simplifications of the 
original $u_{n,k}(f, w)$ from \eqref{eqn_intro_two_sum_fact_types_def_v2} in 
Remark \ref{remark_simplified_unkfw} 
(\cf Table \ref{table_simplifies_unkfw}). 
We proved the exact formula for $\widehat{u}_{n,k}(f, w)$ in the previous 
subsection using a more combinatorial argument involving the 
multiple convolutions of the function $D_f(n)$ constructed recursively in 
Definition \ref{def_djn_Dn_func_defs} (see Table \ref{table_cases_of_Dn}). 
In this section we define a sequence of related polynomials 
$P_j(w; t)$ whose coefficients are 
the corresponding simplified forms of the inverse matrices. 
We prove in Proposition \ref{prop_Pjwt_poly_matrix_exp} below that 
\[
\sum_{k=1}^n \widehat{u}_{n,k}(f, w) \cdot P_k(w, t) = t^n. 
\]
where the $P_k(w, t)$ sequence is defined by \eqref{eqn_polys_Pjwt_prop_def_v1}. 
We then find and prove the form of a weight function $\omega(t)$ which 
provides us with the orthogonality condition 
\begin{subequations} 
\begin{align} 
\label{eqn_gen_orthog_condition_stmt_v1} 
\int_{|t|=1,t\in\mathbb{C}} \omega(t) P_i(w; t) P_j(w; t) dt =: \hat{c}_i(w) \Iverson{i=j}, 
\end{align} 
where we define the right-hand-side coefficients by 
\begin{equation} 
\label{eqn_gen_orthog_condition_stmt_v2} 
\hat{c}_n(w) := \int_{|t|=1,t\in\mathbb{C}} \omega(t) \left(P_n(w; t)\right)^2 dt. 
\end{equation} 
This construction, which we develop and make rigorous below, 
provides us with another method by which we may exactly extract the form of the 
simplified matrices $\widehat{u}_{n,k}(f, w)$. 
Namely, we have that for all $n \geq 1$ and $1 \leq k \leq n$ the operation 
\begin{equation} 
\label{eqn_gen_orthog_condition_stmt_v3} 
\widehat{u}_{n,k}(f, w) = \frac{1}{\hat{c}_k(w)} \int_{|t|=1,t\in\mathbb{C}} \omega(t) t^n P_k(w, t) dt, 
\end{equation}  
\end{subequations} 
yields an exact formula for our matrix entries of interest here. 
We now develop the requisite machinery to prove that this construction holds. 

\begin{prop}[A Partition-Related Polynomial Sum] 
\label{prop_Pjwt_poly_matrix_exp} 
Let an arithmetic function $f$ be fixed and for an indeterminate $w \in \mathbb{C}$ 
let $\widehat{f}(n)$ denote 
\[
\widehat{f}(n) := \frac{w^n}{w^n-1} f(n). 
\]
For natural numbers $j \geq 1$ and any indeterminate $w$, let the polynomials 
\begin{equation} 
\label{eqn_polys_Pjwt_prop_def_v1}
P_j(w; t) := \sum_{i=1}^{j} \left(\sum_{d|j} 
     \widehat{f}(d) p\left(\frac{j}{d}-i\right)\right) t^i. 
\end{equation}
Then for all $n \geq 1$ we have that 
\[
\sum_{k=1}^n \widehat{u}_{n,k}(f, w) \cdot P_k(w, t) = t^n. 
\]
\end{prop}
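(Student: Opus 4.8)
The plan is to recognize the claimed polynomial identity as a single instance of the defining factorization relation for $\widehat{u}_{n,k}(f,w)$, applied to one cleverly chosen arithmetic function. Recall from Remark \ref{remark_simplified_unkfw} together with the divisor-sum form of $\widehat{L}_{f,g}$ in \eqref{eqn_lfgnw_sum_divsum_exp_v1} that for \emph{every} arithmetic function $g$ and every $n \geq 1$,
\[
\sum_{k=1}^n \widehat{u}_{n,k}(f,w) \sum_{d|k} \widehat{f}(d)\, g\!\left(\frac{k}{d}\right) = \sum_{\substack{j \geq 0 \\ n - G_j > 0}} (-1)^{\ceiling{j/2}} g(n - G_j).
\]
First I would rewrite the coefficients of $P_k(w,t)$ in \eqref{eqn_polys_Pjwt_prop_def_v1} as an inner divisor sum: interchanging the order of summation and using $p(m) = 0$ for $m < 0$ to extend the range of $i$, one obtains $P_k(w,t) = \sum_{d|k} \widehat{f}(d)\, g^{\ast}(k/d)$, where $g^{\ast}(m) := \sum_{i=1}^{m} p(m-i)\, t^i$. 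Thus $P_k(w,t)$ is \emph{exactly} the left-hand inner sum above evaluated at the distinguished function $g = g^{\ast}$.

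Next I would substitute $g = g^{\ast}$ into the factorization relation, which immediately turns its left-hand side into $\sum_{k=1}^n \widehat{u}_{n,k}(f,w) P_k(w,t)$, so the whole problem reduces to evaluating the pentagonal-number sum $\sum_{j \ge 0,\, n-G_j>0} (-1)^{\ceiling{j/2}} g^{\ast}(n-G_j)$. Here I would pass to ordinary generating functions. The Cauchy product defining $g^{\ast}$ yields $\sum_{m \ge 1} g^{\ast}(m)\, q^m = \frac{tq}{1-tq}\cdot \frac{1}{(q;q)_{\infty}}$, while the alternating pentagonal sum is precisely the coefficient $[q^n]\bigl((q;q)_{\infty}\sum_{m} g^{\ast}(m)\, q^m\bigr)$, since $(q;q)_{\infty} = \sum_{j \ge 0}(-1)^{\ceiling{j/2}} q^{G_j}$. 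The infinite product then cancels, leaving $[q^n]\frac{tq}{1-tq} = t^n$, which is the desired right-hand side.

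The only genuinely delicate points are bookkeeping rather than substance. I must check that the inner divisor sum in $P_k$ really coincides with $\widehat{f}\ast g^{\ast}$ evaluated at $k$ (the upper limit $k$ versus $k/d$ on the index $i$ is harmless, because $p$ vanishes on negative arguments), and I must invoke the factorization relation in its correct form with inner argument $g(k/d)$, as dictated by \eqref{eqn_lfgnw_sum_divsum_exp_v1}. I expect the main obstacle to be confirming that the relation quoted from Remark \ref{remark_simplified_unkfw} holds for an \emph{arbitrary} $g$ with that inner argument; once this is pinned down, the generating-function cancellation is immediate, and no induction or appeal to Proposition \ref{prop_exact_pnrec_for_hatunk} is needed.
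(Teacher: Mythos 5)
Your argument is correct, but it takes a genuinely different route from the paper's. The paper proves the proposition by showing directly that $(w^n-1)\,P_n(w;t)=\sum_{k=1}^n u_{n,k}^{(-1)}(f,w)\,t^k$, i.e.\ that $P_n$ is the generating polynomial in $t$ of the $n$-th row of the inverse matrix: it expands $[t^k]P_n(w;t)$, converts the factor $(w^n-1)w^d/(w^d-1)$ into the geometric sum $\sum_{i=1}^{n/d}w^{id}$, and matches the result against the explicit inverse-matrix formula of Proposition \ref{prop_unk_inverse_matrix}; the claimed identity $\sum_k \widehat{u}_{n,k}(f,w)P_k(w,t)=t^n$ then follows from the orthogonality $\sum_k \widehat{u}_{n,k}\,\widehat{u}_{k,j}^{(-1)}=\Iverson{n=j}$. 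You instead feed the single test function $g^{\ast}(m)=\sum_{i=1}^m p(m-i)t^i$ into the defining factorization relation for $\widehat{u}_{n,k}(f,w)$ --- legitimate, since that relation holding for arbitrary $g$ is precisely what defines $u_{n,k}(f,w)$ in \eqref{eqn_intro_two_sum_fact_types_def_v2} --- and then collapse the resulting pentagonal-number sum by the cancellation of $(q;q)_{\infty}$ against $(q;q)_{\infty}^{-1}$, leaving $[q^n]\,tq/(1-tq)=t^n$. Your route avoids Proposition \ref{prop_unk_inverse_matrix} and the explicit $w$-expansion of the inverse matrix entirely, at the cost of having to pin down the ``arbitrary $g$'' form of the relation; you are right that the inner argument in Remark \ref{remark_simplified_unkfw} must be read as $g(k/d)$ rather than the misprinted $g(n/d)$, and your handling of the range of $i$ via $p(m)=0$ for $m<0$ is also correct. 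The paper's approach yields the additional structural fact \eqref{eqn_Pnwt_unkinvfw_exp_formula_proof_v1} identifying $P_n$ with the inverse-matrix row polynomial, which it reuses in the proof of Proposition \ref{prop_another_matrix_formula}; your approach is shorter and more self-contained for the stated claim alone.
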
 
\begin{proof} 
The claim is equivalent to proving that for each $n \geq 1$, we have that 
\begin{align} 
\label{eqn_Pnwt_unkinvfw_exp_formula_proof_v1} 
(w^n-1) \cdot P_n(w; t) = \sum_{k=1}^n u_{n,k}^{(-1)}(f, w) \cdot t^k. 
\end{align} 
Notice that the previous equation also implies that 
\begin{align} 
\label{eqn_Pnwt_unkinvfw_exp_formula_v2}
P_n(w; t) & = \sum_{k=1}^{n} \widehat{u}_{n,k}^{(-1)}(f, w) \cdot t^k = 
     \sum_{k=1}^n \left(\sum_{d|n} \widehat{f}(d) p\left(\frac{n}{d}-k\right) 
     \right) t^k 
     = \sum_{d|n} \widehat{f}(d) \times 
     \sum_{i=0}^{\frac{n}{d}-1} p(i) \cdot t^{\frac{n}{d}-i}. 
\end{align} 
Now finally, for each $1 \leq k \leq n$, we can expand the coefficients of the left-hand-side as 
\begin{align*} 
[t^k] P_n(w; t) & = \sum_{d|n} \frac{(w^n-1) w^d}{w^d-1} f(d) p(n/d-k) \\ 
     & = \sum_{d|n} f(d) p(n/d-k) \left(\sum_{i=1}^{n/d} w^{id}\right) \\ 
\tag{m = id} 
     & = \sum_{m=1}^{n} \left(\sum_{d|m} f(d) p(n/d-k) \Iverson{d|n}\right) w^m \\ 
     & = \sum_{m=1}^n \left(\sum_{d|(m,n)} f(d) p(n/d-k) \right) w^m. 
\end{align*} 
Hence by the formula for the inverse matrices given in 
Proposition \ref{prop_unk_inverse_matrix}, we have proved our claim. 
\end{proof} 

\begin{prop}[Another Matrix Formula] 
\label{prop_another_matrix_formula} 
For $n \geq 1$ and $1 \leq k \leq n$, we have the following formula for 
the simplified matrix entries: 
\[
\widehat{u}_{n,k}(f, w) = 
     \sum_{\substack{j \geq 0 \\ n-G_j > 0\\k|n-G_j}} 
      (-1)^{\ceiling{\frac{j}{2}}} \cdot  
     \widehat{f}^{-1}\left(\frac{n-G_j}{k}\right). 
\] 
\end{prop}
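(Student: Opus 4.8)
The plan is to obtain this formula by feeding the identity of Lemma~\ref{lemma_Dastfhat} directly into the exact expansion already established in Corollary~\ref{cor_exact_formula_for_hatunkfw}. Recall that Corollary~\ref{cor_exact_formula_for_hatunkfw} expresses $\widehat{u}_{n,k}(f,w)$ as a signed pentagonal-number sum whose generic summand is
\[
(-1)^{\ceiling{j/2}}\left(D_f\!\left(\frac{n-G_j}{k}\right)\Iverson{n-G_j\equiv 0 \bmod k} + \frac{1}{\widehat{f}(1)}\Iverson{n-G_j=k}\right),
\]
taken over all $j \geq 0$ with $n-G_j>0$. Lemma~\ref{lemma_Dastfhat}, in the form of \eqref{eqn_fhat_Dinv_formula_exp_v1}, identifies $D_f(m)+\varepsilon(m)/\widehat{f}(1)$ with the Dirichlet inverse $\widehat{f}^{-1}(m)$, so the entire argument amounts to recognizing the parenthesized expression above as a single evaluation of $\widehat{f}^{-1}$.

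First I would fix an index $j$ with $n-G_j>0$ and split on whether $k \mid n-G_j$. When $k \mid n-G_j$, write $m := (n-G_j)/k \geq 1$; then $\Iverson{n-G_j\equiv 0 \bmod k}=1$ and $\Iverson{n-G_j=k} = \Iverson{m=1} = \varepsilon(m)$, so the parenthesized term collapses exactly to $D_f(m)+\varepsilon(m)/\widehat{f}(1) = \widehat{f}^{-1}(m) = \widehat{f}^{-1}((n-G_j)/k)$ by Lemma~\ref{lemma_Dastfhat}. When $k \nmid n-G_j$, the first Iverson bracket is $0$, and the second is also $0$ because $n-G_j=k$ would force $k \mid n-G_j$; hence the whole summand for that $j$ vanishes and may simply be dropped from the sum.

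Combining the two cases, only the indices $j$ with $n-G_j>0$ and $k\mid n-G_j$ survive, and each contributes precisely $(-1)^{\ceiling{j/2}}\widehat{f}^{-1}((n-G_j)/k)$, which is the claimed formula. The argument is almost entirely bookkeeping, so the only point demanding real care — the main (modest) obstacle — is verifying that the $\frac{1}{\widehat{f}(1)}\Iverson{n-G_j=k}$ correction term in Corollary~\ref{cor_exact_formula_for_hatunkfw} supplies exactly the $\varepsilon$-contribution of $\widehat{f}^{-1}$ and is never double counted: one must confirm that this correction fires only at $m=1$ and that it is automatically subsumed by (rather than additional to) the divisibility restriction $k\mid n-G_j$, which is what the case split above guarantees.
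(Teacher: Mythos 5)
Your proof is correct, and it takes a genuinely different (and arguably cleaner) route than the paper's. The paper derives Proposition~\ref{prop_another_matrix_formula} from the polynomial machinery of Section~\ref{subSection_orthog_conditions_integral_formulas}: starting from \eqref{eqn_Pnwt_unkinvfw_exp_formula_v2} it writes $\sum_{i=0}^{n-1} p(i)\,t^{n-i} = (\widehat{f}^{-1} \ast P_{-}(w;t))(n)$, applies the pentagonal number theorem to express $t^n$ as a signed sum of these convolutions over $n-G_j$, and then matches against $t^n = \sum_k \widehat{u}_{n,k}(f,w) P_k(w,t)$. You instead bypass the polynomials entirely and substitute the identity $D_f(m) + \varepsilon(m)/\widehat{f}(1) = \widehat{f}^{-1}(m)$ from \eqref{eqn_fhat_Dinv_formula_exp_v1} directly into Corollary~\ref{cor_exact_formula_for_hatunkfw}, with the case split on $k \mid n - G_j$ handling the Iverson brackets; your observation that the $\frac{1}{\widehat{f}(1)}\Iverson{n-G_j=k}$ term is exactly the $\varepsilon$-contribution at $m=1$ (note $D_f(1)=0$ since $\operatorname{ds}_{2j}(f;1)$ is an empty sum, so the identification is consistent there) is the whole content, and the $k \nmid n-G_j$ terms vanish as you say. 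There is no circularity, since Corollary~\ref{cor_exact_formula_for_hatunkfw} and Lemma~\ref{lemma_Dastfhat} are established earlier and independently of this proposition. What your approach buys is a purely arithmetic, two-line derivation that makes transparent why the proposition is just Corollary~\ref{cor_exact_formula_for_hatunkfw} restated through the Dirichlet inverse; what the paper's approach buys is that it situates the formula inside the orthogonal-polynomial framework it is building in that section, at the cost of a somewhat more opaque ``substitute into \eqref{eqn_gen_orthog_condition_stmt_v3}'' step.
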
 
\begin{proof} 
According to the last expansion in \eqref{eqn_Pnwt_unkinvfw_exp_formula_v2}, we have that 
\[
\sum_{i=0}^{n-1} p(i) t^{n-i} = \left(\widehat{f}^{-1} \ast P_{-}(w; t)\right)(n), 
\] 
or equivalently that 
\[
t^n = \sum_{\substack{j \geq 0 \\ n-G_j > 0}} (-1)^{\ceiling{\frac{j}{2}}} \cdot 
     \left(\widehat{f}^{-1} \ast P_{-}(w; t)\right)(n-G_j). 
\] 
Then by substituting the previous equation into 
\eqref{eqn_gen_orthog_condition_stmt_v3} we have our result. 
\end{proof} 

\begin{theorem} 
Suppose that the form of the sequence 
$\{\hat{c}_k(\omega)\}_{k \geq 1}$ is given. 
Let $D_{f}(n) := \operatorname{DTFT}[f](n)$ denote the 
discrete time Fourier transform of $f$ at $n$. Then writing 
$t:=e^{iu}$ for $0 \leq u \leq 2\pi$, we have the following exact 
expression for the weight function $\omega(t)$ which varies only depending on 
on the prescribed sequence of $\hat{c}_k(\omega)$: 
\[
\omega\left(e^{\frac{\imath u}{2}}\right) = 2 \cdot 
     D_f\left(\sum_{G_r < i}(-1)^{\left\lceil \frac{r}{2} \right\rceil}
     \sum_{G_l < i - G_r} (-1)^{\left\lceil \frac{l}{2}\right\rceil}
     \left((\hat{c}_{-} \ast \widehat{f}^{-1}) \ast 
     \widehat{f}^{-1}\right)(i-G_l-G_r)\right)(u). 
\]
\end{theorem}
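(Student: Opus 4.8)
The plan is to regard the unknown weight $\omega$ as a function on the unit circle $|t|=1$ and to pin it down through its Laurent (Fourier) coefficients $\omega_s := \frac{1}{2\pi\imath}\oint_{|t|=1}\omega(t)\,t^{-s-1}\,dt$; once these are known explicitly as a sequence in $s$, the series $\sum_s \omega_s t^s$ reassembles $\omega$, and this reassembly is exactly a discrete time Fourier transform, which is why the answer is phrased through $D_f=\operatorname{DTFT}$. The three relations I would feed in are the monomial expansion $t^n=\sum_{k=1}^n \widehat{u}_{n,k}(f,w)P_k(w,t)$ of Proposition \ref{prop_Pjwt_poly_matrix_exp}, the diagonal normalization \eqref{eqn_gen_orthog_condition_stmt_v2}, and the recovery formula \eqref{eqn_gen_orthog_condition_stmt_v3}. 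The key structural input is \eqref{eqn_Pnwt_unkinvfw_exp_formula_v2}, which factors $P_n(w;t)=\sum_{d|n}\widehat{f}(d)Q_{n/d}(t)$ with $Q_m(t)=\sum_{i=0}^{m-1}p(i)t^{m-i}$; equivalently, each $P_k$ carries exactly one factor of $\widehat f$, invertible via $\widehat f^{-1}$, and one copy of the partition generating function, invertible against $(q;q)_\infty$.

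First I would read \eqref{eqn_gen_orthog_condition_stmt_v2} as a linear map sending the sequence of Fourier coefficients $\{\omega_s\}$ to the prescribed data $\{\hat c_k(w)\}$, with kernel given by the coefficients of the squared polynomial $P_k(w;t)^2$. Since this map is lower triangular in the degree, it is invertible, and inverting it expresses each $\omega_s$ as a weighted sum of the $\hat c_k(w)$. Because the pairing is quadratic in $P_k$, the inversion introduces two factors of $\widehat f^{-1}$; because the partition series enters $Q_m$, undoing it against $(q;q)_\infty=\sum_{j\ge 0}(-1)^{\lceil j/2\rceil}q^{G_j}$ (as already used in Corollary \ref{cor_exact_formula_for_hatunkfw} and in the proof of Proposition \ref{prop_another_matrix_formula}) produces exactly the two nested alternating pentagonal sums over $G_r$ and $G_l$; and the prescribed data $\hat c_k(w)$ enters only through the iterated Dirichlet convolution $(\hat c_{-}\ast\widehat f^{-1})\ast\widehat f^{-1}$ evaluated at the shifted argument $i-G_l-G_r$. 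Collecting these pieces yields precisely the coefficient sequence appearing inside $D_f(\cdots)$.

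Finally I would reconstruct $\omega$ from this coefficient sequence by forming its Fourier series. The substitution $t=e^{\imath u/2}$ together with the contour measure $dt=\imath t\,du$ implicit in \eqref{eqn_gen_orthog_condition_stmt_v3} accounts for the half-angle and the leading factor of $2$, so that the assembled series is literally $2\cdot\operatorname{DTFT}[\,\cdot\,](u)=2\,D_f(\cdots)(u)$, as claimed. The main obstacle is the middle step: showing that inverting the quadratic-in-$P_k$ triangular system collapses the divisor- and gcd-coupled sums coming from two $\widehat u$-type factors into the single clean iterated Dirichlet convolution at argument $i-G_l-G_r$, and verifying the attendant consistency so that the bilinear expression legitimately defines one coefficient per frequency. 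Tracking the normalization in the last step (the factor $2$, the half-angle, and the orientation of the contour) is a secondary but real bookkeeping hazard.
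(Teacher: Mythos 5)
Your outline follows the same route as the paper's proof: expand $\hat{c}_i(\omega)$ from \eqref{eqn_gen_orthog_condition_stmt_v2} using the factored form \eqref{eqn_Pnwt_unkinvfw_exp_formula_v2} of $P_i(w;t)$, strip away two Dirichlet factors of $\widehat{f}$ and two partition convolutions via $(q;q)_\infty=\sum_{j\geq 0}(-1)^{\lceil j/2\rceil}q^{G_j}$, and reassemble $\omega$ by a DTFT. The difficulty is that the step you defer as ``the main obstacle'' is not a secondary verification --- it is the entire content of the proof. Writing $M(m):=\int_{|t|=1}\omega(t)\,t^{m}\,dt$, squaring $P_i(w;t)=\sum_{d|i}\widehat{f}(d)\sum_{r=0}^{i/d-1}p(r)\,t^{i/d-r}$ and integrating against $\omega$ gives
\begin{equation*}
\hat{c}_i(\omega)=\sum_{d|i}\sum_{c|i}\widehat{f}(d)\,\widehat{f}(c)\sum_{r=0}^{\frac{i}{d}-1}\sum_{l=0}^{\frac{i}{c}-1}p(r)\,p(l)\,M\!\left(\tfrac{i}{d}+\tfrac{i}{c}-r-l\right),
\end{equation*}
a sum over two \emph{independent} divisors $d,c$ of $i$ whose quotients are coupled inside the argument of $M$. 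This is not of the form $(\widehat{f}\ast\widehat{f}\ast E)(i)$ for a single-variable $E$, so the assertion that convolving twice with $\widehat{f}^{-1}$ and applying the pentagonal-number inversion twice ``collapses'' it into $((\hat{c}_{-}\ast\widehat{f}^{-1})\ast\widehat{f}^{-1})(i)=\sum_{r,l}p(r)p(l)\,M(2i-r-l)$, and thence into an expression for $M(2i)$ alone, is precisely what must be argued; the paper's proof consists of exactly this manipulation, and it is where all the delicacy lies. As written, your middle paragraph restates the target identity rather than deriving it.

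A second, smaller point: your explanation of the half-angle and the leading factor of $2$ is not the right mechanism. It has nothing to do with the contour measure $dt=\imath t\,du$; it arises because the procedure only ever isolates the even-indexed moments $M(2i)=D_{h}^{-1}(2i)$, where $h(u)=\omega(e^{\imath u})$, and the DTFT of the subsampled sequence $\{D_{h}^{-1}(2i)\}_{i}$ returns $\tfrac{1}{2}\,h(u/2)$. For the same reason, your claim that the triangular system determines ``each $\omega_s$'' overstates what is recovered: only the even-index Fourier data of $\omega$ ever enters, which is exactly why the conclusion is a formula for $\omega\left(e^{\imath u/2}\right)$ rather than for $\omega\left(e^{\imath u}\right)$ itself.
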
 
\begin{proof} 
We have that 
\begin{align*}
\hat{c}_i(\omega) = \sum_{d|i} \widehat{f}(d) \times 
     \sum_{r=0}^{\frac{i}{d}-1} p(r) \times 
     \sum_{c|i}\widehat{f}(c) \times 
     \sum_{l=0}^{\frac{i}{c}-1} p(l) \times 
     \int_{|t|=1} \omega(t) t^{\frac{i}{d}+\frac{i}{c}-r-l} dt. 
\end{align*} 
For the $t:=e^{iu}$ and $0\leq u\leq 2\pi$ defined above, let 
$h(u) = \omega(e^{\imath u}) = \omega(t)$. 
By a direct appeal to M\"oebius inversion we see that 
\begin{align*}
((\hat{c}_{-} \ast \widehat{f}^{-1}) \ast \widehat{f}^{-1})(i) = 
     \sum_{r=0}^{i-1} \sum_{l=0}^{i-1} p(r) p(l) D_{h(u)}^{-1}(2i-l-r).
\end{align*} 
Then we can obtain that 
\begin{align*}
\sum_{r=0}^{i-1} p(r) D_{h}^{-1}(2i-r) = 
     \sum_{G_l < i} (-1)^{\left\lceil \frac{l}{2} \right\rceil} 
     \left((\hat{c}_{-} \ast \widehat{f}^{-1}) \ast 
     \widehat{f}^{-1}\right)(i-G_l), 
\end{align*} 
and 
\begin{align*}
D_{h}^{-1}(2i) = 
     \sum_{G_r < i} (-1)^{\left\lceil \frac{r}{2} \right\rceil} 
     \sum_{G_l<i-G_r}(-1)^{\left\lceil \frac{l}{2} \right\rceil} 
     \left((\hat{c}_{-} \ast \widehat{f}^{-1}) \ast 
     \widehat{f}^{-1}\right)(i-G_l-G_r).
\end{align*}
Thus by taking DTFT of both sides we arrive at the formula 
\begin{align*}
\frac{1}{2} h\left(\frac{u}{2}\right) = 
     D_h\left(\sum_{G_r < i} (-1)^{\left\lceil \frac{r}{2} \right\rceil}
     \sum_{G_l < i-G_r}(-1)^{\left\lceil \frac{l}{2} \right\rceil}
     \left((\hat{c}_{-} \ast \widehat{f}^{-1}) \ast 
     \widehat{f}^{-1}\right)(i-G_l-G_r)\right)(u) 
\end{align*}
Since $h(u) = \omega(e^{\imath u}) = \omega(t)$ this proves our key formula. 
\end{proof} 

\section{Conclusions} 
\label{Section_Concl}

We have proved several new expansions of the type I and type II sums defined by 
\eqref{eqn_sum_vars_TL_defs_intro} for any prescribed arithmetic functions $f$ and $g$. 
Our new results proved in the article include treatments of the expansions of these 
two sum types by both matrix-based factorization theorems and analogous 
identities formulated through discrete Fourier transforms of special function sums. 
The type I sums implicitly define many special number theoretic functions and 
sequences by exponential sum variants of this type. 
Perhaps the most notable canonical example of this sum type is given by 
Euler's totient function which counts the number of integers relatively prime to 
a natural number $n$. The M\"obius function also has a representation in the form of 
a type I sum. 

The type II sums form an alternate flavor of the ordinary 
divisor sums enumerated by Lambert series generating functions and the 
Dirichlet convolutions of two arithmetic functions $f$ and $g$. 
These sums are sometimes refered to as Anderson-Apostol sums, or $k$-convolutions in 
the references. The prototypical example of sums of this type are given by the 
Ramanujan sums $c_q(n)$ which form expansions of many other special number theoretic 
functions by composition and infinite series. 
Our results provide new and useful expansions that characterize common and 
important classes of sums that arise in applications. 
Our results are unique in that we are able to relate partition functions to the 
expansions of these general classes of sums in both cases.

\newpage 

\setcounter{section}{0}
\renewcommand{\thesection}{\Alph{section}}

\section{Appendix: Notation and conventions in the article} 
\label{Appendix_Glossary_NotationConvs}

     \vskip -0.35in
     \printglossary[type={symbols},title={},style={glossstyle},nogroupskip=true]
     \bigskip\hrule\bigskip

\end{document}